\theoremstyle{plain}%
\newtheorem{theorem}{Theorem}[section]
\newtheorem{lemma}[theorem]{Lemma}
\newtheorem{corollary}[theorem]{Corollary}
\newcounter{stepcounter}
\newtheorem{step}[stepcounter]{Step}
\theoremstyle{definition}
\newtheorem{definition}[theorem]{Definition}
\theoremstyle{remark}
\newtheorem{remark}[theorem]{Remark}
\title{An analysis of the induced linear operators associated to divide and color models}
\author{ and Jeffrey E. Steif}
\author{Malin Pal\"o Forsstr\"om
\thanks{Chalmers University of Technology and Gothenburg University, Gothenburg, 
Sweden and
KTH Royal Institute of Technology, Stockholm, Sweden.\ \ Email:
        \hbox{malinpf@kth.se}}
\and         Jeffrey E. Steif
\thanks{Chalmers University of Technology and Gothenburg University, Gothenburg, Sweden.\ \ Email:
        \hbox{steif@chalmers.se}}
}
\date{\today}
\begin{document}

\maketitle

\begin{abstract}
{We study the natural linear operators associated to {\it divide and color (DC) models}. The degree of nonuniqueness of the random partition yielding a DC model is directly related to the dimension of the kernel of these linear operators. We determine exactly the dimension of these kernels as well as analyze a permutation-invariant version. We also obtain properties of the solution set for certain parameter values which will be important in (1) showing that large threshold discrete Gaussian free fields are DC models and in (2) analyzing when the Ising model with a positive external field is a DC model, both in future work.
However, even here, we give an application to the Ising model on a triangle.}

 \medskip\noindent
 \emph{Keywords and phrases.} Divide and color models
 \newline
 MSC 2010 \emph{subject classifications.}
 Primary  60G99
  \medskip\noindent
\end{abstract}

\section{Introduction, some notation and summary of results}\label{s:intro}

There is a very simple mechanism for constructing random variables with a (positive) dependency 
structure, which are called \emph{divide and color models}. These were introduced in its general form in \cite{st2017}, but have already arisen in many different contexts.

\begin{definition}
A \(\{0,1\}\)-valued process \( X \coloneqq (X_i)_{i \in S} \) is a \emph{divide and color (DC) model} if \( X \) can be generated as follows. First choose a random partition of $S$ according to 
some arbitrary distribution $\pi$, and then independently of this and
independently for different partition elements in the random partition,
assign, with probability \( p \), {\it all} the variables in a partition element
the value \( 1 \) and with probability \( 1-p \) assign 
{\it all} the variables the value \( 0 \). This final \(\{0,1\}\)-valued 
process is called the \emph{DC} model associated to $(\pi,p)$.
We also say that $(\pi,p)$ is a \emph{color representation} of $X$.
\end{definition}

As detailed in \cite{st2017}, many processes in probability theory
are DC  models; examples are the Ising model with zero external field,
the fuzzy Potts model with zero external field, the stationary distributions for the voter model
and random walk in random scenery. 

While certainly the distribution of a divide and color model determines $p$, it 
in fact does not determine the distribution of $\pi$. This was seen
for small sets \( S \) in \cite{st2017}, and this lack of uniqueness
will essentially be completely determined in this paper.

Given a set $S$, we let $\mathcal{B}_S$ denote the collection of partitions of  $S$.
We denote \(\{1,2,3, \ldots, n\}\) by $[n]$ and if \(S =[n] \), we write $\mathcal{B}_n$
for $\mathcal{B}_S$. \( |\mathcal{B}_n| \) is called the \(n \)th Bell number. We let \( P_n \) denote the number of integer partitions on \( n \).
Since $S$ will always be finite, we will, without loss of generality, assume it is equal to \( [n]\) for some $n \in \mathbb{N}$.

The law of any random partition of $[n]$ can be identified with a probability vector 
$q=\{q_\sigma\}_{\sigma\in \mathcal{B}_n}\in \mathbb{R}^{\mathcal{B}_n}$. 
Similarly, the law of any random \(\{0,1\}\)-valued vector $(X_1,\ldots,X_n)$ can be identified with  a probability vector 
$\nu=(\nu(\rho))_{\rho\in \{ 0,1 \}^n}\in \mathbb{R}^{\{ 0,1 \}^n}$. The definition of 
a DC model yields immediately, for each $n$ and $p\in [0,1]$, a map 
$\Phi_{n,p}$ from random partitions of $[n]$,
i.e., from probability vectors $q=\{q_\sigma\}_{\sigma\in \mathcal{B}_n}$
to probability vectors $\nu=(\nu_\rho)_{\rho\in \{ 0,1 \}^n}$.
While a triviality, a crucial observation is that $\Phi_{n,p}$ is an affine map of the relevant simplices.
As a result, this map naturally extends to a linear mapping $A_{n,p}$ from $\mathbb{R}^{\mathcal{B}_n}$ to 
$\mathbb{R}^{\{ 0,1 \}^n}$. This will allow us to more easily analyze questions concerning
nonuniqueness of color representations,  since, by placing the problem in a vector space context,  one can consider formal solutions (to be defined below) which one might show afterwards are in fact nonnegative solutions and therefore color representations.

 To describe the linear operator $A_{n,p}$, we identify $\mathcal{B}_n$ with the natural basis for 
$\mathbb{R}^{\mathcal{B}_n}$ in which case $A_{n,p}$ is uniquely determined by giving the image of each
\( \sigma \in \mathcal{B}_n\) which is done as follows. Given \( \sigma \in \mathcal{B}_n\) and a binary string 
\( \rho \in \{ 0,1 \}^n \), we write
\( \sigma \lhd \rho \) if $\rho$ is constant on the partition elements of $\sigma$. 
We then have
\[
A_{n,p} (\sigma)_\rho\coloneqq A_{n,p} (\rho,\sigma)\coloneqq  \begin{cases}
 p^{c(\sigma,\rho)}(1-p)^{\| \sigma\| - c(\sigma,\rho)} &\textnormal{if } \sigma \lhd \rho \cr
0 &\textnormal{ otherwise}
\end{cases}
\]
where \( \| \sigma \| \) is equal to the number of partition elements in the partition  \( \sigma \) and \( c = c(\sigma, \rho)\) is the number of partition elements on  which $\rho$ is 1.
We would not be surprised if this operator has occurred in other contexts but we have not
been able to find it in the literature.

As seen in~\cite{BMMU}~and~\cite{st2017}, the cases $p=1/2$ and $p\neq 1/2$ behave quite differently when it comes to DC models. We will see this difference also below when studying the dimension of the kernels of the corresponding operators. In \cite{st2017}, the below was obtained for some small values of $n$. For \( \rho \in \{ 0,1\}^n \), we write \( - \rho \) to denote the binary string  where the zeros and ones in $\rho$ are switched, i.e.\ \( -\rho = 1 - \rho \).

\begin{theorem}\label{theorem: MatrixRankp12}
	$A_{n,\frac{1}{2}}$ has rank $2^{n-1}$ and hence nullity $|\mathcal{B}_n|-2^{n-1}$. 
	The range of $A_{n,\frac{1}{2}}$ is
\begin{equation}\label{eq: marg.equal1/2}
\big\{  ( \nu(\rho))_{\rho \in \{0,1\}^n}: \forall \rho, \,\,  \nu(\rho)=\nu(-\rho)\big\}.
\end{equation}
\end{theorem}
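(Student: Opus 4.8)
The plan is to prove, in order: (a) the range of $A_{n,\frac12}$ is contained in the set $V$ appearing on the right-hand side of \eqref{eq: marg.equal1/2}; (b) $\dim V = 2^{n-1}$; and (c) $V$ is contained in the range of $A_{n,\frac12}$. Together (a)--(c) show that the range equals $V$ and the rank is $2^{n-1}$, and then, since $\mathbb{R}^{\mathcal B_n}$ has dimension $|\mathcal B_n|$, the rank--nullity theorem gives nullity $|\mathcal B_n| - 2^{n-1}$.

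For (a) and (b), note first that at $p=\frac12$ the defining formula becomes $A_{n,\frac12}(\sigma)_\rho = 2^{-\|\sigma\|}$ if $\sigma\lhd\rho$ and $=0$ otherwise, so $A_{n,\frac12}(\sigma)_\rho$ depends on $\rho$ only through whether $\sigma\lhd\rho$. A binary string is constant on every block of $\sigma$ precisely when its complement is, so $\sigma\lhd\rho$ if and only if $\sigma\lhd-\rho$; hence $A_{n,\frac12}(\sigma)_\rho = A_{n,\frac12}(\sigma)_{-\rho}$ for all $\sigma$ and $\rho$. Every column of $A_{n,\frac12}$, and therefore every vector in its range, thus lies in $V$, proving (a). For (b), since no element of $\{0,1\}^n$ equals its own complement, the $2^n$ strings fall into $2^{n-1}$ complementary pairs $\{\rho,-\rho\}$; writing $(\delta_\rho)_{\rho\in\{0,1\}^n}$ for the standard basis of $\mathbb{R}^{\{0,1\}^n}$, the $2^{n-1}$ vectors $\delta_\rho+\delta_{-\rho}$, one per pair, are linearly independent and clearly span $V$, so $\dim V = 2^{n-1}$.

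The main content is (c), and the key observation is that partitions with at most two blocks already suffice. For $T\subseteq[n]$ let $\rho_T\in\{0,1\}^n$ be the string equal to $1$ on $T$ and $0$ on $[n]\setminus T$, so that $\rho_\emptyset$ and $\rho_{[n]}$ are the all-zeros and all-ones strings and $-\rho_T=\rho_{[n]\setminus T}$. For the one-block partition $\{[n]\}$ one reads off directly from the formula that
\[ A_{n,\frac12}\big(\{[n]\}\big) = \tfrac12\big(\delta_{\rho_\emptyset}+\delta_{\rho_{[n]}}\big), \]
while for a two-block partition $\{T,[n]\setminus T\}$ (with $\emptyset\neq T\subsetneq[n]$) the only strings constant on both blocks are $\rho_\emptyset,\rho_{[n]},\rho_T,\rho_{[n]\setminus T}$, giving
\[ A_{n,\frac12}\big(\{T,[n]\setminus T\}\big) = \tfrac14\big(\delta_{\rho_\emptyset}+\delta_{\rho_{[n]}}+\delta_{\rho_T}+\delta_{\rho_{[n]\setminus T}}\big). \]
Hence $\delta_{\rho_\emptyset}+\delta_{\rho_{[n]}}$ lies in the range, and so does $\delta_{\rho_T}+\delta_{\rho_{[n]\setminus T}} = 4A_{n,\frac12}(\{T,[n]\setminus T\}) - 2A_{n,\frac12}(\{[n]\})$ for every nonempty proper $T$. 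As $T$ runs over such subsets the pairs $\{\rho_T,\rho_{[n]\setminus T}\}$ exhaust all complementary pairs other than $\{\rho_\emptyset,\rho_{[n]}\}$, so these vectors are exactly the $2^{n-1}$ basis vectors of $V$ produced in (b); therefore $V\subseteq\mathrm{range}(A_{n,\frac12})$. (When $n=1$ there are no two-block partitions, but then $\rho_\emptyset,\rho_{[1]}$ are the only strings and the one-block identity already exhibits a spanning vector of $V$.)

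I do not expect a serious obstacle: the only thing one must spot is that restricting attention to partitions into at most two blocks already yields a spanning set of $V$, after which (c) is a one-line computation. The points needing a little care are the degenerate case $n=1$ and verifying that the exhibited vectors form a genuine basis of $V$ — but since they are supported on pairwise disjoint complementary pairs $\{\rho,-\rho\}$, both their independence and their spanning $V$ are immediate.
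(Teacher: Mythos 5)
Your proof is correct and takes essentially the same route as the paper: both arguments rest on the observation that the images of the one-block and two-block partitions, computed exactly as you compute them, already span the symmetric subspace $V$, while the containment of the range in $V$ is the same column-wise symmetry check. The only difference is cosmetic --- the paper writes down an explicit formal solution for an arbitrary symmetric probability vector $\nu$ supported on these partitions (which it then reuses in the subsequent remark to get a nonnegative solution when the constant strings carry mass at least $1/2$), whereas you exhibit the basis vectors $\delta_\rho+\delta_{-\rho}$ directly; for the theorem itself this changes nothing.
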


\begin{remark}
In the proof of Theorem~\ref{theorem: MatrixRankp12}, we will obtain a concrete formula for a formal solution (to be defined below) for any $\nu$ satisfying~\eqref{eq: marg.equal1/2}. If, in addition, $\nu$ is a probability vector with the property that
the probability of being constant is at least $.5$, then this formal solution will be a
nonnegative solution and hence $\nu$ will be a divide and color model.
We mention that there is no such result  when $p\neq 1/2$.
\end{remark}

\begin{theorem}\label{theorem: MatrixRank}
If $p\not \in \{0, 1/2,1\}$, then $A_{n,p}$ has rank $2^n-n$ and hence nullity $|\mathcal{B}_n|-(2^n-n)$. 
The range of $A_{n,p}$ is equal to
\begin{equation}\label{eq: marg.equal}
\big\{  
( \nu(\rho))_{\rho \in \{0,1\}^n}: \forall i,\,\, 
p\sum_{\rho:\rho(i)=0} \nu(\rho)=(1-p)\sum_{\rho:\rho(i)=1} \nu(\rho) \big\}.
\end{equation}
(The vector subspace defined by~\eqref{eq: marg.equal}  is the vector space analogue of the marginal
distributions each being $p\delta_1 +  (1-p)\delta_0$.)
In particular, if $\nu$ is a probability vector on \( \{ 0,1\}^n \), all of whose marginals 
are $p\delta_1 +  (1-p)\delta_0$, then $\nu$ is in the range of $A_{n,p}$. 
(Of course, there might not be a probability vector \( q = (q_\sigma)_{\sigma \in \mathcal{B}_n}\) which
maps to $\nu$; i.e.\ $\nu$ need not be a DC model.)
\end{theorem}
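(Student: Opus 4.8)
The plan is to pass to ``moment coordinates'' and reduce the statement to a single determinant computation. Let $Z\colon\mathbb R^{\{0,1\}^n}\to\mathbb R^{2^{[n]}}$ be the linear map $\nu\mapsto\widehat\nu$, where $\widehat\nu(A)\coloneqq\sum_{\rho:\ \rho(i)=1\ \forall i\in A}\nu(\rho)$; by M\"obius inversion on the Boolean lattice, $Z$ is a linear isomorphism. A short computation (it is just the probability that, in the DC model with deterministic partition $\sigma$, all coordinates in $A$ equal $1$) shows that $\widehat{A_{n,p}(\sigma)}(A)=p^{k(\sigma,A)}$, where $k(\sigma,A)$ is the number of partition elements of $\sigma$ meeting $A$. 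Since $k(\sigma,\emptyset)=0$ and $k(\sigma,\{i\})=1$, every vector in the range of $ZA_{n,p}$ lies in $W\coloneqq\{h\in\mathbb R^{2^{[n]}}:\ h(\{i\})=p\,h(\emptyset)\text{ for all }i\in[n]\}$, and a direct check shows $Z$ maps the subspace in~\eqref{eq: marg.equal} exactly onto $W$. As $W$ is cut out by the $n$ visibly independent equations $h(\{i\})=p\,h(\emptyset)$, it has dimension $2^n-n$. This already gives that the range of $A_{n,p}$ is contained in~\eqref{eq: marg.equal} and that its rank is at most $2^n-n$; the content of the theorem is the reverse inequality.

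For the lower bound I would exhibit $2^n-n$ partitions with linearly independent images. For $T\subseteq[n]$ with $|T|\neq1$, let $\sigma_T$ be the partition with blocks $T$ and the singletons $\{i\}$, $i\notin T$ (so $\sigma_\emptyset$ is the all-singletons partition); these are $2^n-n$ distinct partitions. A direct count gives $k(\sigma_T,A)=|A\setminus T|+\mathbf 1[A\cap T\neq\emptyset]=|A|-(|A\cap T|-1)^+$, so $\widehat{A_{n,p}(\sigma_T)}(A)=p^{|A|}\,\widetilde N_{A,T}$ with $\widetilde N_{A,T}\coloneqq r^{(|A\cap T|-1)^+}$ and $r\coloneqq1/p$. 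Restricting to the coordinates $A$ with $|A|\neq1$ and cancelling the nonzero factor $p^{|A|}$, linear independence of these images reduces to the invertibility of the ``set-intersection matrix'' $\widetilde N=\bigl(f(|A\cap T|)\bigr)_{|A|\neq1,\ |T|\neq1}$, where $f(j)\coloneqq r^{(j-1)^+}$. Let $g$ be the binomial-transform inverse of $f$, i.e.\ $f(m)=\sum_{j=0}^m\binom{m}{j}g(j)$; then $f(|A\cap T|)=\sum_{C\subseteq A\cap T}g(|C|)$, so $\widetilde N$ is a principal submatrix of $BGB^{\mathsf T}$ with $B_{A,C}=\mathbf 1[C\subseteq A]$ and $G=\mathrm{diag}(g(|C|))$ ranging over all subsets of $[n]$. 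A routine calculation gives $g(0)=1$ and, for $k\ge1$,
\[
g(k)=\frac{r-1}{r}\bigl((r-1)^{k-1}+(-1)^k\bigr);
\]
in particular $g(1)=0$. The vanishing $g(1)=0$ is precisely what encodes the $n$ defining relations of $W$; it lets the singleton levels be dropped from the factorization, leaving $\widetilde N=B'G'(B')^{\mathsf T}$ with $B'$ the unitriangular (hence invertible) inclusion matrix on subsets of size $\neq1$ and $G'=\mathrm{diag}(g(|C|))_{|C|\neq1}$. Hence $\widetilde N$ is invertible iff $g(k)\neq0$ for all $k\neq1$, and the displayed formula shows this holds exactly when $r\notin\{0,1,2\}$, i.e.\ $p\notin\{0,\tfrac12,1\}$. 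Therefore the rank equals $2^n-n$, and since the range is contained in the $(2^n-n)$-dimensional subspace~\eqref{eq: marg.equal} it equals it; the nullity is then $|\mathcal B_n|-(2^n-n)$. Finally, a probability vector with all marginals $p\delta_1+(1-p)\delta_0$ satisfies $\widehat\nu(\emptyset)=1$ and $\widehat\nu(\{i\})=p$, so it lies in $W$, hence in the range of $A_{n,p}$.

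The main obstacle is the invertibility of $\widetilde N$. This breaks into three parts: choosing a family of partitions whose induced submatrix is analyzable (the one-big-block partitions being the natural candidate, and their number matching the $2^n-n$ free coordinates exactly); recognizing the result as a set-intersection matrix and carrying out the binomial-transform factorization; and pinning down that $g$ vanishes at $k=1$ only and is otherwise nonzero precisely off the three excluded values of $p$. The last point requires care: a priori the determinant $\prod_{k\neq1}g(k)^{\binom nk}$ could have carried factors such as $r-k$ for $k\ge3$, which would wrongly force $p\neq1/k$; the computation shows the only factors are $r-1$, $r-2$ and the harmless $r$, matching the hypothesis $p\notin\{0,1/2,1\}$ exactly.
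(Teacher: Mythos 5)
Your proposal is correct and follows essentially the same route as the paper: pass to the moment system $\sum_{\sigma} p^{\|\sigma_S\|}q_\sigma=\nu(1^S)$, use the one-nonsingleton-block partitions $\sigma^T$ with $|T|\neq 1$, and exploit that the alternating binomial/M\"obius transform of $S\mapsto p^{\|(\sigma^T)_S\|}$ vanishes exactly when $|S|=1$ --- indeed your $p^{|S|}g(|S|)$ is precisely the paper's factor $p(1-p)^{|S|}+(-p)^{|S|}(1-p)$. The remaining differences are cosmetic: you obtain the rank bound from invertibility of the intersection matrix via the symmetric factorization $B'G'(B')^{\mathsf T}$ and the determinant $\prod_{k\neq 1}g(k)^{\binom{n}{k}}$, where the paper multiplies on one side by $B$ and deletes the singleton rows to get triangularity, and you check the range inclusion in moment coordinates rather than via the paper's flipping bijection $T^i_\sigma$.
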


\medskip

We now discuss the relationship between a nontrivial kernel and nonunique color representations. 
Given $n,p$ and $\nu$, a (not necessarily nonnegative) vector 
$q\in\mathbb{R}^{\mathcal{B}_n}$ is called a {\it formal solution} if
\begin{equation}\label{eq: the main equation system}
 A_{n,p}  {q}=\nu
 \end{equation}
while a nonnegative such vector $q$ is called a {\it nonnegative solution}.
It is easy to see, using inclusion-exclusion, that~\eqref{eq: the main equation system} is equivalent to the system
\begin{equation}\label{eq: the main equation system II}
\sum_{\sigma \in \mathcal{B}_n} p^{\| \sigma_S\| } q_\sigma = \nu_{p}(1^S), \qquad S \subseteq [n]
\end{equation}
where \( \sigma_S \) denotes the restriction of a partition \( \sigma \in \mathcal{B}_n \) to a set \( S \subseteq [n] \), and \( 1^S \) is the event that \( \rho \in \{ 0,1 \}^n\) is equal to 1 on \( S \).
If $\nu$ is a probability vector, then the sum of the coordinates of any formal solution will always be one, but it will be a nonnegative solution if and only if it corresponds to a divide and color representation.
Therefore the relationship between nontriviality of the kernel of  \( A_{n,p} \)  and uniqueness of a color representation (i.e., uniqueness of a nonnegative solution) is as follows.
First, of course \( A_{n,p} \) has a nontrivial kernel if and only if for any $\nu$ in the range,
there are an infinite number of formal solutions. Hence if the kernel is trivial, there is always 
at most one divide and color representation for any DC model. 
The converse is not true since an i.i.d.\ process clearly
has at most one color representation even when the kernel is nontrivial. However, as is also explained in \cite{st2017}, if the kernel is nontrivial, $\nu$ is in the range and there exists a nonnegative solution all of whose coordinates are positive, then
one has infinitely many nonnegative solutions since we can add a small constant times an element in the kernel. 
More generally, 
if $\nu$ is in the range and there exists a nonnegative solution $q$ for $\nu$, then there is another
nonnegative solution (and then infinitely many) if and only if there is an element $q'$ in the 
kernel whose negative-valued coordinates are contained in the support of $q$.

It is sometimes natural to consider situations where one has some further invariance property, a special case being full invariance meaning everything considered is invariant under the full symmetric group. In this case, the characterization of the ranges and of the dimensions of the kernel are given in Theorem~\ref{theorem: MatrixRankINV} in Section~\ref{sec: invariant}.

Our next theorem will be used in \cite{PS.threshold} to show that threshold discrete Gaussian free fields are DC models for large threshold. This theorem is included here since it heavily relies on the 
{\it algebraic picture} used in the proofs of the above results. It gives sufficient conditions for a family of 
probability measures \(  \nu_p  \) to be a DC model for small \( p \) in terms of the asymptotic behavior
of certain probabilities as $p\rightarrow 0$. In the below, for a given set $S$ of coordinates,
\( \nu(1^S) \) will denote the $\nu$-probability that we have all 1's on $S$ and
\(\nu_p(1^S 0 ^{S^c})\) will denote the $\nu$-probability that we have all 1's on $S$ and 0's on $S^c$.

\begin{theorem}\label{theorem: solution sim lemma}
Let \( (\nu_p)_{p \in (0,1)} \) be a family of probability measures on \( \{ 0,1\}^n \). 
Assume that \( \nu_p \) has 
marginals   \(  p\delta_1 + (1-p) \delta_0 \) and that for all \( S \subseteq [n] \) with \( |S| \ge 2 \) 
and all \( k \in S \), as \( p \to 0 \), we have that  
\begin{equation}\label{eq: the main assumption}
p  \nu_p(1^{S \backslash \{ k \}}) \ll \nu_p(1^S) \asymp \nu_p(1^S 0^{[n]\backslash S})
\end{equation} 
and
\[
\lim_{p \to 0}  \sum_{S \subseteq [n] \colon |S| \geq 2} \frac{ \nu_p(1^S0^{S^c}) }{p}< 1 .
  \]
Then \( \nu_p \) is a DC model for all sufficiently small \( p > 0 \).
\end{theorem}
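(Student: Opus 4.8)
The plan is to work entirely in the algebraic picture and produce an explicit formal solution $q$ for $\nu_p$, then verify that it is in fact nonnegative when $p$ is small. Since $\nu_p$ has the correct marginals, Theorem~\ref{theorem: MatrixRank} guarantees that $\nu_p$ lies in the range of $A_{n,p}$, so formal solutions exist; the whole issue is producing one that can be checked to be nonnegative. The natural candidate is the solution supported on partitions that are ``close to'' the partition into singletons: concretely, I would try $q$ supported on partitions $\sigma$ each of whose blocks has size at most $2$ (or, if that is not flexible enough, partitions with a single nontrivial block). Writing out~\eqref{eq: the main equation system II} for such a $q$ expresses $\nu_p(1^S)$, for every $S$, as a polynomial in $p$ whose coefficients are the unknowns $q_\sigma$; one then solves this triangular-in-$|S|$ system and reads off the $q_\sigma$ in terms of the quantities $\nu_p(1^{S'})$ for $S'\subseteq S$. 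This is the step that should be mostly routine inclusion–exclusion bookkeeping, using~\eqref{eq: the main equation system II} with $S$ ranging over all subsets.

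The core of the argument is then the sign analysis. The weight $q_{\sigma_0}$ on the singleton partition $\sigma_0$ should be close to $1$: indeed $q_{\sigma_0}$ will be of the form $1 - \sum_{S:|S|\ge 2}(\text{something})/p + \dots$, and the hypothesis $\lim_{p\to0}\sum_{|S|\ge 2}\nu_p(1^S0^{S^c})/p<1$ is exactly what forces $q_{\sigma_0}>0$ for small $p$. For a partition $\sigma$ with nontrivial blocks $B_1,\dots,B_m$, the weight $q_\sigma$ should factor (approximately) as a product over the blocks of terms governed by $\nu_p(1^{B_j}0^{B_j^c})/p$; the first relation in~\eqref{eq: the main assumption}, namely $\nu_p(1^S)\asymp\nu_p(1^S0^{[n]\setminus S})$, is what lets one pass between ``all ones on $S$'' and ``exactly $S$'' and thereby show these block weights are nonnegative (up to lower-order corrections). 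The condition $p\,\nu_p(1^{S\setminus\{k\}})\ll\nu_p(1^S)$ is the one that kills the error terms: when we expand $q_\sigma$ via inclusion–exclusion, the would-be-negative contributions involve $\nu_p(1^{S'})$ for proper subsets $S'$ multiplied by extra factors of $p$, and this hypothesis says precisely that those are of strictly smaller order than the main (nonnegative) term, so they cannot flip the sign once $p$ is small enough.

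I would organize the write-up as: (1) set up the ansatz for $q$ and solve~\eqref{eq: the main equation system II} to get closed-form expressions for each $q_\sigma$; (2) show $q_{\sigma_0}\to$ (something positive) using the second hypothesis; (3) show each $q_\sigma$ for $\sigma\ne\sigma_0$ is nonnegative for small $p$, using~\eqref{eq: the main assumption} to dominate error terms by the leading term; (4) conclude that $q$ is a nonnegative solution, hence by definition $\nu_p$ is a DC model for all sufficiently small $p$. The main obstacle I anticipate is step (1)–(3) for partitions with \emph{several} nontrivial blocks: one must check that the cross terms coming from different blocks interact correctly and that the factorization into per-block contributions really does have the right sign, which requires carefully tracking which subsets $S$ appear in the inclusion–exclusion for a multi-block $\sigma$ and confirming that the asymptotic relations~\eqref{eq: the main assumption}, applied blockwise, suffice to control all of them simultaneously. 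A secondary subtlety is choosing the ansatz flexibly enough that the linear system for the $q_\sigma$ is actually solvable with the available data; if partitions into blocks of size $\le 2$ are insufficient one allows arbitrary partitions but with weights that vanish rapidly as the blocks grow, which only strengthens the smallness needed on $p$ but does not change the structure of the argument.
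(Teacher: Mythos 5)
Your overall strategy is the right one, and it is the paper's: restrict the support of $q$ to partitions close to the all-singletons partition, solve \eqref{eq: the main equation system II} explicitly, show the weight of the singleton partition stays positive via the hypothesis $\lim_{p\to0}\sum_{|S|\ge2}\nu_p(1^S0^{S^c})/p<1$, and show the remaining weights are asymptotically $\nu_p(1^S0^{S^c})/p\ge0$, with the condition $p\,\nu_p(1^{S\setminus\{k\}})\ll\nu_p(1^S)$ killing the error terms. However, there is a genuine gap in the ansatz you actually commit to. Partitions all of whose blocks have size at most $2$ cannot work: for a nonnegative $q$ summing to $1$ supported on such partitions, every $\sigma$ satisfies $\|\sigma_S\|\ge\lceil|S|/2\rceil$, so \eqref{eq: the main equation system II} forces $\nu_p(1^S)\le p^{\lceil|S|/2\rceil}$, whereas the hypotheses allow (and the intended applications, e.g.\ threshold Gaussian fields, exhibit) $\nu_p(1^S)\asymp\nu_p(1^S0^{S^c})$ of order $p$ for $|S|\ge3$. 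Moreover, this family of partitions is far larger than the number of effective equations, so your ``triangular-in-$|S|$'' solving step is not well defined, and the ``main obstacle'' you flag --- cross terms and approximate factorization over several nontrivial blocks --- is an artifact of this over-parametrized choice rather than something that needs to be overcome.

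The variant you mention only in passing --- support on partitions $\sigma^T$ with at most one non-singleton block --- is the one that works, and what makes it work is a fact your proposal does not supply: these partitions number exactly $2^n-n$, and the submatrix of the system with rows indexed by $S$ with $|S|\ne1$ is invertible (this is Step 1 in the proof of Theorem~\ref{theorem: MatrixRank}; the paper inverts it explicitly via the matrices $B$, $D$, $C$). With that in hand the $q_{\sigma^T}$ are uniquely determined, $q_{\sigma^T}=e_T^tCDB\nu_p\sim\nu_p(1^T0^{T^c})/p$ for $|T|\ge2$, and $q_{\sigma^\emptyset}=1-\sum_{|T|\ge2}q_{\sigma^T}>0$ for small $p$ by the second hypothesis; no blockwise factorization or cross-term analysis ever enters. (A small slip: what you call the ``first relation'' in \eqref{eq: the main assumption}, $\nu_p(1^S)\asymp\nu_p(1^S0^{[n]\setminus S})$, is the second one; the first is $p\,\nu_p(1^{S\setminus\{k\}})\ll\nu_p(1^S)$.) So to repair the proposal: commit to the single-nontrivial-block support, justify invertibility of the resulting square system, and then run your steps (2)--(4) essentially as written.
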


Next, since $p=1/2$ plays a special role, it will turn out to be useful to understand the limiting
behavior of the solution set as $p\rightarrow 1/2$.  This will also be needed in understanding
which Ising models are DC models in the presence of an external field; the latter will be studied
in \cite{palo}. The following result captures this limiting behavior.

\begin{theorem}\label{theorem: lim sol I}
Let \( (\nu_p)_{p \in (0,1)} \) be a family of probability measures on \( \{ 0,1\}^n \). 
Assume that \( \nu_p \) has marginals   \(  p\delta_1 + (1-p) \delta_0 \),  and that for each \( S \subseteq [n] \), \( \nu_p(1^S) \) is differentiable in \( p \) at \( p = 1/2 \). Assume further that for any sets \( T \subseteq [n] \) and \( S \subseteq T \), and any \( p \in (0,1) \), we have that
\[
\nu_p (0^S1^{T \backslash S}) = \nu_{1-p} (0^{T \backslash S} 1^S).
\]
Finally, for each \( p \in (0,1) \) let \( (q_\sigma^{(p)})_{\sigma \in \mathcal{B}_n} \) be a formal solution to the equation 
\begin{equation*}
\sum_{\sigma \in \mathcal{B}_n} p^{\| \sigma_S\| } q_\sigma^{(p)} = \nu_{p}(1^S), \qquad S \subseteq [n].
\end{equation*}
Then the set of  subsequential limits \( (q_\sigma)_{\sigma \in \mathcal{B}_n} \) of sequences \( ((q_\sigma^{(p)})_{\sigma \in \mathcal{B}_n})_{p\in (0,1)} \) 
as \( p \to 1/2 \) is exactly the set of solutions to the system of equations
\begin{equation}\label{eq: limiting system}
\begin{cases}
\sum_{\sigma \in \mathcal{B}_n}  2^{-\| \sigma_S\| } q_\sigma  = \nu_{1/2}(1^S), & S \subseteq [n],\, |S| \text{ even}  \cr
\sum_{\sigma \in \mathcal{B}_n} \| \sigma_S \| 2^{-\| \sigma_S\|+1 } q_\sigma = \nu'_{1/2}(1^S), & S \subseteq [n],\, |S| \text{ odd.} 
\end{cases}
\end{equation}

\end{theorem}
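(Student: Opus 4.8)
The plan is to set up a Taylor expansion of the defining equation system in the parameter $p$ around $p=1/2$, and to match the resulting two families of linear equations (the zeroth-order constraints and the first-order constraints) with the system~\eqref{eq: limiting system}. Write $p = 1/2 + t$ for small $t$, and for each $S \subseteq [n]$ expand both sides of $\sum_\sigma p^{\|\sigma_S\|} q_\sigma^{(p)} = \nu_p(1^S)$ in powers of $t$. The key observation is that the symmetry hypothesis $\nu_p(0^S 1^{T\setminus S}) = \nu_{1-p}(0^{T\setminus S} 1^S)$ forces, via inclusion–exclusion, strong parity constraints: applied with $T=S$ it gives $\nu_p(1^S) = \nu_{1-p}(0^S)$, and more generally one deduces that $\nu_p(1^S)$, as a function of $p$, is even about $p=1/2$ when $|S|$ is even and odd about $p=1/2$ when $|S|$ is odd (after subtracting the appropriate constant). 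Hence for $|S|$ even the zeroth-order term of $\nu_p(1^S)$ is $\nu_{1/2}(1^S)$ and its derivative at $1/2$ is what it is, while for $|S|$ odd the zeroth-order structure is constrained and the first derivative $\nu'_{1/2}(1^S)$ is the first meaningful datum. On the left side, $\frac{d}{dp} p^{\|\sigma_S\|}\big|_{p=1/2} = \|\sigma_S\| 2^{-\|\sigma_S\|+1}$, which is exactly the coefficient appearing in the odd-$|S|$ equations of~\eqref{eq: limiting system}.

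The argument then splits into two directions. \emph{Subsequential limits satisfy~\eqref{eq: limiting system}.} Suppose $q^{(p_j)} \to q$ along some sequence $p_j \to 1/2$. For $|S|$ even, simply pass to the limit in $\sum_\sigma p_j^{\|\sigma_S\|} q_\sigma^{(p_j)} = \nu_{p_j}(1^S)$ to get the first line of~\eqref{eq: limiting system}. For $|S|$ odd this is not enough; instead I would form, for each odd $S$, a suitable \emph{difference quotient}: combine the equation for parameter $p$ with the equation for parameter $1-p$. Using the symmetry hypothesis on $\nu$ and the corresponding relation between formal solutions at $p$ and $1-p$ (a formal solution at $1-p$ is obtained from one at $p$ by the involution $\sigma \mapsto \sigma$ composed with swapping $0 \leftrightarrow 1$, which at the level of the $1^S$-equations has a transparent effect), one sees that $\big(\sum_\sigma p^{\|\sigma_S\|} q_\sigma^{(p)} - \sum_\sigma (1-p)^{\|\sigma_S\|} q_\sigma^{(1-p)}\big)/(2p-1)$ tends, for odd $|S|$, to $\sum_\sigma \|\sigma_S\| 2^{-\|\sigma_S\|+1} q_\sigma$ on the left and to $\nu'_{1/2}(1^S)$ on the right. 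This yields the second line of~\eqref{eq: limiting system}.

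\emph{Every solution of~\eqref{eq: limiting system} is a subsequential limit.} Here I would argue by a dimension/affine-subspace count together with compactness. The set of formal solutions $q^{(p)}$ for fixed $p \neq 1/2$ is an affine subspace of $\mathbb{R}^{\mathcal{B}_n}$ whose direction space is $\ker A_{n,p}$, of dimension $|\mathcal{B}_n| - (2^n - n)$ by Theorem~\ref{theorem: MatrixRank}; at $p=1/2$ the solution set of~\eqref{eq: limiting system} should likewise be shown to be an affine subspace of the same dimension $|\mathcal{B}_n| - (2^n-n)$ (this is a rank computation for the $2^n \times |\mathcal{B}_n|$ matrix whose rows are the $2^{n-1}$ even-$|S|$ rows $(2^{-\|\sigma_S\|})_\sigma$ and the odd-$|S|$ rows $(\|\sigma_S\|2^{-\|\sigma_S\|+1})_\sigma$ — one expects rank exactly $2^n-n$, matching the $p\neq 1/2$ case, with the $n$ "lost" constraints being the marginal constraints). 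Given a target solution $q^\ast$ of~\eqref{eq: limiting system}, I would choose, for each $p$, the formal solution $q^{(p)}$ closest to $q^\ast$ (orthogonal projection onto the affine solution set), show this family is bounded as $p \to 1/2$, extract a subsequential limit $q$, note by the first part that $q$ solves~\eqref{eq: limiting system}, and finally check that the projection construction forces $q = q^\ast$ — this last point uses that the direction spaces $\ker A_{n,p}$ converge (in the Grassmannian) to the direction space of~\eqref{eq: limiting system}, which is the crux.

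The main obstacle I anticipate is precisely this last convergence-of-solution-spaces step: one must show that $\ker A_{n,p}$ does not "degenerate" as $p\to 1/2$ in a way that loses dimension in the limit, i.e.\ that the limiting constraint system~\eqref{eq: limiting system} is not merely implied by but is \emph{equivalent} (in the limit) to the full family $\{A_{n,p} q = \nu_p\}$. Concretely this reduces to the rank statement for the mixed even/odd matrix above and to showing that the first-order expansion captures \emph{all} the information surviving from the odd-$|S|$ equations — there is a real danger of missing constraints or of the difference-quotient limit being only an inclusion rather than an equality, so the rank bookkeeping (getting exactly $2^n-n$, not more) is where the argument has to be done carefully, likely by exhibiting an explicit spanning set of $n$ linear relations among the rows analogous to the marginal relations in~\eqref{eq: marg.equal}.
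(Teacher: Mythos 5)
Your overall strategy (first-order expansion at $p=1/2$, parity bookkeeping, and a rank-plus-compactness argument for the converse) matches the paper's, but your mechanism for extracting the odd-$|S|$ equations is genuinely different and, once one detail is fixed, it works. The detail: as written, your difference quotient involves an independently chosen formal solution $q^{(1-p)}$ at parameter $1-p$, and then the term $(1-p)^{\|\sigma_S\|}\bigl(q^{(p)}_\sigma-q^{(1-p)}_\sigma\bigr)/(2p-1)$ is uncontrolled --- $q^{(1-p)}$ need not converge along your subsequence, and even if it converges to the same limit this quotient may blow up. What saves the argument is the observation you allude to but must make precise: taking $T=[n]$ in the symmetry hypothesis gives $\nu_p(\rho)=\nu_{1-p}(-\rho)$ for every $\rho$, so the \emph{same} vector $q^{(p)}$ is simultaneously a formal solution of the $(1-p)$-system for $\nu_{1-p}$. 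Subtracting the two systems satisfied by this single vector yields
\[
\sum_{\sigma\in\mathcal{B}_n}\frac{p^{\|\sigma_S\|}-(1-p)^{\|\sigma_S\|}}{2p-1}\,q^{(p)}_\sigma=\frac{\nu_p(1^S)-\nu_{1-p}(1^S)}{2p-1},
\]
whose left side converges along a convergent subsequence to $\sum_\sigma\|\sigma_S\|2^{-\|\sigma_S\|+1}q_\sigma$ and whose right side, being a symmetric difference quotient, converges to $\nu'_{1/2}(1^S)$ by differentiability at $1/2$; together with the trivial passage to the limit in the original equations this gives the forward inclusion. This replaces the paper's route, which applies the M\"obius-type transform $\varphi$ of Lemma~\ref{lemma: invertible operation} to the whole system and uses Lemmas~\ref{eq: odd rows are zero} and~\ref{eq: even rows are zero} to kill the unwanted orders row by row. (Your heuristic that $\nu_p(1^S)$ is even or odd about $1/2$ up to a constant is not quite right --- for $|S|=2$ one has $\nu_p(1^S)-\nu_{1-p}(1^S)=2p-1$ --- but nothing in your argument actually uses it.)

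The genuine gap is in the converse direction. Your projection/Grassmannian scheme is fine in principle (it is essentially the paper's terse ``standard argument''), but it stands or falls with the rank statement you only conjecture: that the $2^n\times|\mathcal{B}_n|$ matrix with even rows $(2^{-\|\sigma_S\|})_\sigma$ and odd rows $(\|\sigma_S\|2^{-\|\sigma_S\|+1})_\sigma$ has rank exactly $2^n-n$. Lower semicontinuity of rank only bounds things in one direction; without the exact rank you cannot exclude that the limiting system has a strictly larger solution set than the set of subsequential limits, i.e.\ that your first-order equations miss surviving constraints. The paper does exactly this computation in Lemma~\ref{lemma: full rank matrix}: after applying $\varphi$, the mixed matrix becomes the $0/1$ matrix $I(\sigma_S\text{ has at most one odd-sized block})$; restricting to the columns indexed by the partitions $\sigma^T$ with a single non-singleton block and subtracting the singleton rows from the odd rows produces the subset-inclusion matrix $\bigl(I(S\subseteq T)\bigr)$, which has full rank, while the $n$ singleton rows are redundant, giving rank exactly $2^n-n$, matching the rank of $A_{n,p}$ for $p\notin\{0,1/2,1\}$. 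Until you supply this (or an equivalent) computation, the converse half of the theorem is not proved.
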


\begin{remark}
It follows from the proof of this theorem that the system of linear equations given by the equations in \eqref{eq: limiting system} corresponding to even sets is equivalent to the linear equation system 
in~\eqref{eq: the main equation system II} for \( p = 1/2 \).
\end{remark}

The following application of Theorem~\ref{theorem: lim sol I} will be proved in Section~\ref{s:p12}.
We consider the Ising model on a triangle with parameters $J$ and $h$; this is the probability
measure on ${\{1,-1\}}^{[3]}$ which has relative weights
$$
e^{J(\sum_{x\neq y}\eta(x)\eta(y))+h\sum_{x}\eta(x)}
$$
to the configuration $\eta$. Call this measure $\nu_{J,h}$. 
For any $J\ge 0$ and $h>0$, by Theorem~\ref{theorem: MatrixRank},
there is a unique $q^{J,h}\in\mathbb{R}^{\mathcal{B}_3}$ with $A_{3,p}  q^{J,h}=\nu_{J,h}$
where $p=p(J,h)$ is chosen to be the probability that a single site is positive.
The uniqueness of this solution also follows from Theorem 2.1(C) in \cite{st2017}.
If we now, for fixed $J$, let $h$ tend to zero, then any subsequential limit $q^J$ of $(q^{J,h})$
necessarily satisfies $A_{3,1/2}  q^J=\nu_{J,0}$. One natural random partition which yields
$\nu_{J,0}$ as its color process is the so-called random cluster model or Fortuin-Kastelyn
representation denoted by $q^{\text{RCM}}$. Interestingly it turns out that
$q^{\text{RCM}}$ does not correspond to the small $h$ limit.
This was first observed by the second author and Johan Tykesson with the help of Mathematica.
Here we obtain it as a direct corollary of Theorem~\ref{theorem: lim sol I}.

\begin{corollary}\label{corollary: Ising}
For all $J> 0$, $\lim_{h\to 0}q^{J,h}$ exists and does not equal $q^{\text{RCM}}$.
\end{corollary}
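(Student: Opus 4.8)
The plan is to apply Theorem~\ref{theorem: lim sol I} with $n=3$, taking the family $(\nu_p)$ to be the Ising model on the triangle reparametrised by its single-site marginal. Fix $J>0$. From the partition function one checks that $p=p(J,h)$ is a smooth, strictly increasing function of $h$ near $h=0$ with $p(J,0)=1/2$ and $\partial_h p(J,h)|_{h=0}>0$, so it has a smooth local inverse $h=h(p)$; put $\nu_p:=\nu_{J,h(p)}$, viewed as a measure on $\{0,1\}^3$ via $1\leftrightarrow{+1}$. The hypotheses of Theorem~\ref{theorem: lim sol I} then hold: the marginals are $p\delta_1+(1-p)\delta_0$ by the definition of $p$ and the $S_3$-symmetry of the triangle; each $\nu_p(1^S)$ is differentiable at $p=1/2$, being smooth in $h$ composed with $h(p)$; and the identity $\nu_p(0^S1^{T\setminus S})=\nu_{1-p}(0^{T\setminus S}1^S)$ is precisely the global spin-flip symmetry $\nu_{J,h}(\eta)=\nu_{J,-h}(-\eta)$ together with $p(J,-h)=1-p(J,h)$. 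Since $A_{3,p}$ is injective for $p\ne 1/2$ (Theorem~\ref{theorem: MatrixRank}), $q^{J,h(p)}$ is the unique formal solution figuring in the theorem, so the set of subsequential limits of $q^{J,h}$ as $h\to 0$ equals the solution set of~\eqref{eq: limiting system} for $n=3$.

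Next I would write~\eqref{eq: limiting system} out for $n=3$. By the Remark following Theorem~\ref{theorem: lim sol I}, the equations indexed by even $S$ are equivalent to $A_{3,1/2}q=\nu_{J,0}$; since the Fortuin--Kasteleyn representation satisfies $A_{3,1/2}q^{\mathrm{RCM}}=\nu_{J,0}$ and $\ker A_{3,1/2}$ is one-dimensional by Theorem~\ref{theorem: MatrixRankp12} ($|\mathcal{B}_3|-2^{2}=1$), their solution set is the line $q^{\mathrm{RCM}}+\mathbb{R}\kappa$, where $\kappa\in\mathbb{R}^{\mathcal{B}_3}$ is the vector assigning $2$ to the partition into singletons, $-1$ to each of the three partitions with one doubleton, and $1$ to the one-block partition (a direct computation confirms $\kappa\in\ker A_{3,1/2}$). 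Among the odd equations, those with $|S|=1$ reduce to $\nu'_{1/2}(1^{\{i\}})=1$, which holds because $\nu_p(1^{\{i\}})=p$ and $\sum_\sigma\kappa_\sigma=0$; the only genuinely new equation is the $|S|=3$ one, namely $L(q)=\nu'_{1/2}(1^{[3]})$ with
\[
L(q):=\sum_{\sigma\in\mathcal{B}_3}\|\sigma\|\,2^{-\|\sigma\|+1}\,q_\sigma .
\]
A one-line computation gives $L(\kappa)=\tfrac32-3+1=-\tfrac12\ne 0$, so $L$ is a non-constant affine function on the line $q^{\mathrm{RCM}}+\mathbb{R}\kappa$; hence~\eqref{eq: limiting system} has a unique solution and, by the previous paragraph, $\lim_{h\to 0}q^{J,h}$ exists.

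It then remains to check that this solution is not $q^{\mathrm{RCM}}$, i.e.\ that $L(q^{\mathrm{RCM}})\ne\nu'_{1/2}(1^{[3]})$. Writing $u:=e^{-2J}\in(0,1)$, the random-cluster measure on the triangle (edge-probability $1-u$, cluster weight $2$) has normalising constant $2(1+3u^2)$ and puts mass proportional to $8u^3$, $4(1-u)u^2$, and $2(1-u)^2(1+2u)$ on the singleton, doubleton, and one-block partitions, which yields $L(q^{\mathrm{RCM}})=1-u^3/(1+3u^2)$. On the other hand, differentiating the triangle partition function and applying the chain rule through $h\mapsto p$ gives $\nu'_{1/2}(1^{[3]})=3/(3+u^2)$. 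Their difference works out to $u^2(1-u)^3/\big((1+3u^2)(3+u^2)\big)$, which is strictly positive for every $J>0$; so $q^{\mathrm{RCM}}$ violates the $|S|=3$ equation of~\eqref{eq: limiting system} and therefore $\lim_{h\to 0}q^{J,h}\ne q^{\mathrm{RCM}}$.

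The main obstacle is the last paragraph: correctly identifying the random-cluster weights on the triangle and, crucially, computing $\nu'_{1/2}(1^{[3]})$ in the $p$-parametrisation rather than the $h$-parametrisation --- this brings in the chain-rule factor $dh/dp$ obtained by inverting the single-site marginal --- and then verifying that the two resulting rational functions of $e^{-2J}$ really do differ for all $J>0$. The rest is bookkeeping, modulo the routine checks that $\kappa$ spans $\ker A_{3,1/2}$ and that the reparametrisation $h\mapsto p$ is admissible near $h=0$.
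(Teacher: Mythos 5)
Your proof is correct, and it reaches the conclusion by a somewhat different route than the paper, though within the same framework. Both arguments reparametrize by the marginal $p$, verify the hypotheses of Theorem~\ref{theorem: lim sol I} (your use of the spin-flip symmetry and the smooth inverse $h(p)$ matches the paper's), and reduce the problem to the $n=3$ limiting system~\eqref{eq: limiting system}. The paper then exploits $S_3$-invariance to collapse~\eqref{eq: limiting system} to an explicit $3\times 3$ system in $(q_1,q_2,q_3)$, solves it, and compares the resulting $q_2=\frac{12(e^{4J}-1)}{(3+e^{4J})(1+3e^{4J})}$ with $q_2^{\mathrm{RCM}}$. You instead invoke the Remark that the even-$S$ equations are equivalent to $A_{3,1/2}q=\nu_{J,0}$, use that $q^{\mathrm{RCM}}$ is a color representation of $\nu_{J,0}$ (Edwards--Sokal) together with the one-dimensionality of $\ker A_{3,1/2}$ from Theorem~\ref{theorem: MatrixRankp12} to parametrize the even-equation solutions as the line $q^{\mathrm{RCM}}+\mathbb{R}\kappa$, and then use the single genuinely new odd equation ($|S|=3$) twice: $L(\kappa)=-\tfrac12\neq 0$ gives uniqueness (hence existence of the limit, by the same inference from Theorem~\ref{theorem: lim sol I} that the paper makes), and $L(q^{\mathrm{RCM}})-\nu'_{1/2}(1^{[3]})=\frac{u^2(1-u)^3}{(1+3u^2)(3+u^2)}>0$ gives the non-equality without ever solving the system. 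Your numerical inputs check out: $\kappa=(2,-1,-1,-1,1)$ does span $\ker A_{3,1/2}$, your $\nu'_{1/2}(1^{[3]})=3/(3+u^2)$ agrees with the paper's $\frac{3e^{3J}}{3e^{3J}+e^{-J}}$, the FK weights $8u^3$, $4(1-u)u^2$, $2(1-u)^2(1+2u)$ with normalization $2(1+3u^2)$ are right, and the limit point your parametrization produces has total doubleton mass $\frac{12u^2(1-u^2)}{(1+3u^2)(3+u^2)}$, which is exactly the paper's $q_2$. What your approach buys is a cleaner non-equality argument (exhibiting one violated equation rather than comparing coordinates of solved systems); what it costs is the extra reliance on the Remark after Theorem~\ref{theorem: lim sol I} and on the fact that the random-cluster partition is a $p=1/2$ color representation of $\nu_{J,0}$, both of which are available in or alongside the paper.
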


The rest of the paper is organized as follows. The proofs of 
Theorems~\ref{theorem: MatrixRankp12},~\ref{theorem: MatrixRank} 
and~\ref{theorem: MatrixRankINV}  will be given in 
Section~\ref{s: induced}. Then
Theorem~\ref{theorem: solution sim lemma} will be proved in Section~\ref{s:p0} and 
Theorem~\ref{theorem: lim sol I} as well as Corollary~\ref{corollary: Ising}
will be proved in Section~\ref{s:p12}.

 \section{Dimension of the kernels of the induced linear operators} \label{s: induced}

\subsection{Formal solutions for the \(p=1/2 \) case}\label{theorem: p12}

In this subsection, we prove Theorem~\ref{theorem: MatrixRankp12} and demonstate the statement made
in the remark after the statement of this theorem.

\begin{proof}[Proof of Theorem~\ref{theorem: MatrixRankp12}]

Given a \( \{ 0,1\} \)-symmetric probability vector $\nu = (\nu(\rho))$ on $\{ 0,1\}^n$, 
it is easy to verify (and left to the reader) that a formal 
solution (i.e. a solution to $A_{n,\frac{1}{2}}q= \nu$) is given by
\[
q_{\sigma} = \begin{cases} 2\bigl( \nu({0^S1^{S^c}}) + \nu({1^S0^{S^c}})\bigr), 
&\textnormal{if } \sigma = \{S,S^c\},\,\,  S\neq \emptyset,[n] \cr 1 - \sum_{\sigma' \in \mathcal{B}_n \colon | \sigma'| = 2} q_{\sigma'} &\textnormal{if } \sigma = [n] \cr 0 &\textnormal{otherwise.}\end{cases}
\]
In addition, this yields a color representation (i.e., a nonnegative solution 
to $A_{n,\frac{1}{2}} q=\nu$) if and only if
\begin{equation}\label{eq: constant}
\nu({00\ldots 0}) + \nu({11 \ldots 1 }) \geq 0.5
\end{equation}
by observing that
\begin{align*}
q_{[n]} &= 1 - \sum_{\sigma \in \mathcal{B}_n \mathrlap{\colon} \atop | \sigma| = 2} q_\sigma = 1 - \sum_{\{S,S^c\} \colon \atop S \subseteq [n], \, 0<|S|<n}  
2 \bigl( \nu({0^S1^{S^c}}) + \nu({1^S0^{S^c}})\bigr)
\\&= 1 - 2\Bigl(1-\bigl(\nu({00\ldots 0}) + \nu({11 \ldots 1 })\bigr)\Bigr).
\end{align*}

Clearly every element of the range must satisfy the symmetry condition
\eqref{eq: marg.equal1/2} since $p=1/2$ while the first part of the proof shows that any vector
satisfying \eqref{eq: marg.equal1/2} is in the range. This proves the description of the range and from 
this, it follows immediately that the rank is $2^{n-1}$ and hence the nullity
is $|\mathcal{B}_n|-2^{n-1}$. 
\end{proof}

\subsection{Formal solutions for the  $p\neq 1/2$ case}\label{theorem: pnot12}

In this subsection, we prove Theorem~\ref{theorem: MatrixRank}.

\begin{proof}[Proof of Theorem~\ref{theorem: MatrixRank}]

\begin{step}\label{step I}
The rank of $A_{n,p}$ is at least $2^n-n$. 
\end{step}

\begin{proof}[Proof of Step~\ref{step I}] 
Let \( A'_{n,p} \) be the  $2^n\times |\mathcal{B}_n|$ matrix corresponding to the left hand side of~\eqref{eq: the main equation system II}, i.e.\ let 
\begin{equation*}
	A'_{n,p}(S,\sigma) \coloneqq p^{\|\sigma_S\|},\quad S\subseteq n,\, \sigma \in \mathcal{B}_n.
\end{equation*}
 It suffices
to show that the rank of \( A'_{n,p} \) is at least $2^n-n$.

Let $\sigma^\emptyset$ be the partition into singletons and
for each \( T \subseteq [n] \) with \( |T| > 1 \), let \( \sigma^T \in \mathcal{B}_n \) be 
the unique partition with exactly one non-singleton partition element given by 
\( T\). If e.g. \( n = 5 \) we would have that \( \sigma^{\{ 1,2,3 \}} = (123,4,5) \). 
One easily verifies that  \( \| (\sigma^T)_S \| = |S\backslash T| + (1\land |S\cap T|) \) for $T=\emptyset$
or \( |T| > 1 \).

Consider the equation system 
\[
\nu(1^S) = \sum_{T \subseteq [n] \colon |T| \not = 1}  p^{\| (\sigma^T)_S \|} q_{\sigma^T}, \qquad S \subseteq [n] 
\]
and let \( A''=A_{n,p}'' \) be the corresponding $2^n\times (2^n-n)$ matrix. Define \( B = ( B(S,S'))_{S,S' \subseteq [n]} \) by
\[
B(S,S')  \coloneqq (-p)^{|S|-|S'|} I(S' \subseteq S) .
\]
If we order the rows (from top to bottom) and columns (from left to right) of \( B \) such that the 
sizes of the corresponding sets are increasing, then \( B \) is a lower triangular matrix with 
\( B(S,S) = 1 \) for all \( S \subseteq [n] \). In particular, this implies that \( B \) is invertible for 
all \( p \in (0,1) \), and hence  \( A'' \) and \( BA'' \) (also a $2^n\times (2^n-n)$ matrix) have the 
same rank.
Moreover, for any \( S,T \subseteq [n] \) with \(  |T| \not = 1 \)  we get  
\begin{align*}
(BA'')(S,T) &= \sum_{S' \colon S' \subseteq S} (-p)^{|S|-|S'|}A''(S',T) 
\\&= \sum_{S' \colon S' \subseteq S  } (-p)^{|S|-|S'|}p^{|S'\backslash T| + (1\land |S' \cap T|) } 
\\&=(-p)^{|S|} \sum_{S' \colon S' \subseteq S  } (-p)^{-|S'|}p^{|S'\backslash T| } p^{1\land |S' \cap T|} 
\\&=(-p)^{|S|} \sum_{S' \colon S' \subseteq S  } (-p)^{-|S'\cap T|}(-1)^{|S'\backslash T| } p^{1\land |S' \cap T|} 
\\&=
 \begin{cases}
 (-p)^{|S|} \sum_{S' \colon S' \subseteq S  } (-p)^{-|S'|} p^{1\land |S' |}  &\text{if } S \subseteq T \cr
 0 &\text{otherwise.}
\end{cases}
\end{align*}
In the case \( S \subseteq T \), we can simplify further to obtain
\begin{align*}
(BA'')(S,T) &= 
 (-p)^{|S|} \sum_{S' \colon S' \subseteq S } (-p)^{-|S'|} p^{1\land |S' |}   
 \\&
= (-p)^{|S|} \left( (1 - p^{-1})^{|S|} \cdot p + (1-p) \right) 
 \\&
=p  (1-p)^{|S|}+ (-p)^{|S|}(1-p).
\end{align*}
Note that since $p\neq 1/2$, if \( S \subseteq T \), then \( (BA'')(S,T) = 0 \) if and only if \( |S| = 1\).
If we order the rows (from top to bottom) and columns (from left to right) of \( BA'' \) so that the 
corresponding sets are increasing in size, it is obvious that the \((2^n -n) \times (2^n -n) \)
submatrix of \( BA'' \) obtained by removing the rows corresponding to $|S|=1$ has full rank.
This implies that \( BA'' \) has rank at least \( 2^n -n \) which implies the same for
\( A'' \) since $B$ is invertible. Finally, since \( A'' \) is a submatrix of \(A'_{n,p} \), we obtain
the desired lower bound on the rank of the latter.
\end{proof}

\begin{step}\label{step II}
The rank of $A_{n,p}$ is at most $2^n-n$. 
\end{step}

\begin{proof}[Proof of Step~\ref{step II}]
We first claim that if $ \nu = ( \nu({\rho}))_{\rho \in \{ 0,1 \}^n}$ is in the range, then it is in the set
defined in~\eqref{eq: marg.equal}. To see this, let 
$\nu=A_{n,p} \, q$  for some \( q = (q_\sigma)_{\sigma \in \mathcal{B}_n} \) and fix an $i \in [n]$. The expression
in the left hand side of~\eqref{eq: marg.equal} becomes
$$
p\sum_{\rho:\rho(i)=0} \sum_{\sigma \in \mathcal{B}_n}A_{n,p} (\rho,\sigma)q_\sigma=
p\sum_{\sigma \in \mathcal{B}_n}q_\sigma \sum_{\rho:\rho(i)=0} A_{n,p} (\rho,\sigma).
$$
With $\sigma \in \mathcal{B}_n$ fixed, let 
$$
T^i_\sigma: \{\rho:\rho(i)=0\} \mapsto \{ \rho:\rho(i)=1\}
$$
be the bijection which flips $\rho$ on the partition element of $\sigma$ which contains $i$.
It is clear that for all $\rho$ with $\rho(i)=0$, we have
$$
(1-p)A_{n,p} (T^i_\sigma(\rho),\sigma) =p A_{n,p} (\rho,\sigma) 
$$
and hence the previous expression is
$$
(1-p) \sum_{\sigma\in \mathcal{B}_n}q_\sigma \sum_{\rho:\rho(i)=0} A_{n,p} (T^i_\sigma(\rho),\sigma)=
(1-p)\sum_{\sigma\in \mathcal{B}_n}q_\sigma \sum_{\rho:\rho(i)=1} A_{n,p} (\rho,\sigma)=
$$
$$
(1-p)\sum_{\rho:\rho(i)=1} \sum_{\sigma \in \mathcal{B}_n} A_{n,p} (\rho,\sigma) q_\sigma=
(1-p)\sum_{\rho:\rho(i)=1} \nu(\rho).
$$

$A_{n,p}$ is mapping into a $2^n$-dimensional vector space and each of the $n$ equations
in~\eqref{eq: marg.equal} gives one linear constraint. It is easy to see that these
$n$ constraints are linearly independent (for example, one can see this by just looking
at the number of times each of the vectors $0^k1^{n-k}$ appears on the two sides).
It follows that  the rank of $A_{n,p}$ is at most $2^n-n$. 
\end{proof}

With Steps 1 and 2 completed, together with the claim at the start of Step~2,
we conclude that the rank is as claimed and the range is characterized as claimed.
Finally, the claim concerning probability vectors follows immediately.

\end{proof}

\begin{remark}\textcolor{white}{.}
\begin{enumerate}[(i)]
\item The argument for the $p\neq 1/2$ case can equally well be carried out with minor modifications
for the $p=1/2$ case but we preferred the simpler argument which even gives more. 
\item This last proof shows that, when dealing with formal solutions, we only need to use partitions
which have at most one nonsingleton partition element. This is in large contrast to the earlier proof of
the $p=1/2$ case where we only needed to use partitions which have at most two partition elements. 
\item The rank of an operator as a function of its matrix elements
is not continuous but it is easily seen to be lower semicontinuous.
We see this lack of continuity at $p=1/2$ as well as of course at $p=0 $ and \( p = 1 \).
\end{enumerate}
\end{remark}

\subsection{The fully invariant case}\label{sec: invariant}

It is sometimes natural to consider situations where one has some further invariance property.
One  natural case is the following. 
The symmetric group $S_n$ acts naturally on $\mathcal{B}_n$, 
$\{ 0,1\}^n$, $\mathcal{P}(\mathcal{B}_n)$, 
$\mathcal{P}(\{ 0,1\}^n)$, $\mathbb{R}^{\mathcal{B}_n}$ and $\mathbb{R}^{\{ 0,1 \}^n}$
where $\mathcal{P}(X)$ denotes the set of probability measures on $X$.
(Of course $\mathcal{P}(\mathcal{B}_n)\subseteq \mathbb{R}^{\mathcal{B}_n}$ and the action on the former is just
the restriction of the action on the latter; similarly for 
$\mathcal{P}(\{ 0,1\}^n)\subseteq\mathbb{R}^{\{ 0,1 \}^n}$.)
To understand uniqueness of a color representation when we restrict to $S_n$-invariant probability
measures, it is natural to again extend to the vector space setting, which is done as follows.
Let $Q^{\rm{Inv}}_n\coloneqq  \{q\in \mathbb{R}^{\mathcal{B}_n}: g(q) = q \,\,\,\forall g\in S_n\}$
and $V^{\rm{Inv}}_n\coloneqq  \{\nu\in \mathbb{R}^{\{ 0,1 \}^n}: g(\nu)= \nu \,\,\,\forall g\in S_n\}$.
We next let $A^{\rm{Inv}}_{n,p}$ be the restriction of $A_{n,p}$ to  $Q^{\rm{Inv}}_n$.
It is elementary to check that $A^{\rm{Inv}}_{n,p}$ maps into $V^{\rm{Inv}}_n$ and 
furthermore, it is easy to check, by averaging, that 
\begin{equation}\label{eq: InvariantImage}
A^{\rm{Inv}}_{n,p}(Q^{\rm{Inv}}_n)= A_{n,p}(\mathbb{R}^{\mathcal{B}_n})\cap V^{\rm{Inv}}_n.
\end{equation}
Recalling that $P_n$ is the set of partitions of the integer $n$,
we have an obvious mapping from $\mathcal{B}_n$ to $P_n$, denoted by $\sigma\mapsto\pi(\sigma)$, which is constant on \( S_n \) orbits.
$\mathbb{R}^{P_n}$ can then be canonically identified with $Q^{\rm{Inv}}_n$ via  
$(q_{\pi})_{\pi \in P_n}$ is identified with $(q_{\sigma})_{\sigma \in \mathcal{B}_n}$ 
where $q_{\sigma}={q_{\pi(\sigma)}}/{a_{\pi(\sigma)}}$ where $a_{\pi}$
is the number of $\sigma$'s for which $\pi(\sigma)=\pi$. 
In an analogous way, $V^{\rm{Inv}}_n$ can be canonically identified with $\mathbb{R}^{n+1}$;  namely, $(\nu_i)_{0\le i \le n}$ is identified with $(\nu({\rho}))_{\rho \in \{ 0,1\}^n}$ 
where $\nu({\rho})={\nu_{\|\rho\|}}/{\binom{n}{\|\rho\|}}$ and \( \| \rho \| \) is the number of ones in the binary string \( \rho \).

Using this notation, we have the following theorem. Again, in \cite{st2017}, this was done for some small values of $n$.

 \begin{theorem}\label{theorem: MatrixRankINV}
(i). For $p\not \in \{0, {1}/{2},1\}$, 
$A^{\rm{Inv}}_{n,p}$ has rank $n$ and hence nullity $|P_n|-n$. 
The range of $A^{\rm{Inv}}_{n,p}$ (after identifying $V^{\rm{Inv}}_n$ with $\mathbb{R}^{n+1}$) is
\begin{equation}\label{eq: marg.equalINVp}
\big\{  (\nu_0,\ldots,\nu_n): \nu_n=\frac{p}{1-p}\sum_{k=0}^{n-1}   \frac{n-k}{n} \nu_k
-\sum_{k=0}^{n-2}  \frac{k+1}{n}  \nu_{k+1} \big\}.
\end{equation}

(ii) $A^{\rm{Inv}}_{n,\frac{1}{2}}$ has rank $\lfloor n/2 \rfloor +1$ and hence nullity 
$|P_n|-\lfloor n/2 \rfloor -1$. 
The range of $A_{n,\frac{1}{2}}$ (after identifying $V^{\rm{Inv}}_n$ with $\mathbb{R}^{n+1}$) is
\begin{equation}\label{eq: marg.equalINV1/2}
\big\{   (\nu_0,\ldots,\nu_n): \nu_i= \nu_{n-i} \,\,\, \forall i=1,\ldots,n \big\}.
\end{equation}
\end{theorem}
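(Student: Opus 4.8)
The plan is to mirror the strategy used for the non-invariant cases, but carried out in the reduced coordinates $(q_\pi)_{\pi\in P_n}\mapsto(\nu_i)_{0\le i\le n}$. The first step is to write down the reduced matrix explicitly. After identifying $V^{\rm{Inv}}_n$ with $\mathbb{R}^{n+1}$ via $\nu(\rho)=\nu_{\|\rho\|}/\binom{n}{\|\rho\|}$, the equations \eqref{eq: the main equation system II} collapse: for a subset $S$ of size $s$, $\nu_p(1^S)=\sum_{k\ge s}\binom{n-s}{k-s}\nu(\rho_k)$ where $\rho_k$ has $k$ ones, so the system is equivalent to $n+1$ equations indexed by $s=0,\dots,n$, one for each size $s$ of $S$. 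On the left, $\sum_{\sigma}p^{\|\sigma_S\|}q_\sigma$ depends on $\sigma$ only through the integer partition $\pi(\sigma)$ and through $s=|S|$ after averaging over which $s$ elements $S$ picks out; this yields a reduced $(n+1)\times|P_n|$ matrix $\tilde A_{n,p}$ whose columns are indexed by integer partitions. I would record a closed form for its entries (an average of $p^{\|\sigma_S\|}$ over the $S_n$-orbit of a size-$s$ set), which is a polynomial in $p$.

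For part (i), the rank lower bound comes from exhibiting $n$ columns that span an $n$-dimensional space: following Step~1 of the proof of Theorem~\ref{theorem: MatrixRank}, restrict attention to the integer partitions $\pi^t$ corresponding to the $\sigma^T$ with $|T|=t$, i.e.\ the partition of $n$ with one part of size $t$ and the rest singletons, for $t=0$ and $t=2,\dots,n$ — that is $n$ partitions. The same change of basis $B(S,S')=(-p)^{|S|-|S'|}I(S'\subseteq S)$, which descends to the reduced coordinates since it too is $S_n$-equivariant, triangularizes the resulting $(n+1)\times n$ block, and the computation already done in the excerpt shows the diagonal entries are $p(1-p)^{s}+(-p)^s(1-p)$, nonzero for $s\ne 1$ when $p\ne 1/2$. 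Hence the rank is at least $n$. For the upper bound, the single constraint \eqref{eq: marg.equalINVp} must be verified to hold on the range and to be nontrivial; the cleanest route is to take the $n$ marginal constraints from \eqref{eq: marg.equal}, which after symmetrizing all reduce to the one displayed linear relation among $\nu_0,\dots,\nu_n$ (the marginal condition $p\sum_{\rho(i)=0}\nu(\rho)=(1-p)\sum_{\rho(i)=1}\nu(\rho)$ becomes, in terms of the $\nu_k$'s, exactly $\nu_n=\frac{p}{1-p}\sum_{k=0}^{n-1}\frac{n-k}{n}\nu_k-\sum_{k=0}^{n-2}\frac{k+1}{n}\nu_{k+1}$ after the binomial bookkeeping). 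Combining, the rank equals $n$ and the range is the hyperplane \eqref{eq: marg.equalINVp}; the nullity statement is then immediate from $|P_n|=\dim Q^{\rm{Inv}}_n$. One should also invoke \eqref{eq: InvariantImage} to confirm that restricting to the invariant subspace does not shrink the range below $A_{n,p}(\mathbb{R}^{\mathcal{B}_n})\cap V^{\rm{Inv}}_n$.

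For part (ii), the $\{0,1\}$-symmetry available at $p=1/2$ forces $\nu(\rho)=\nu(-\rho)$, i.e.\ $\nu_i/\binom{n}{i}=\nu_{n-i}/\binom{n}{n-i}$, which gives $\nu_i=\nu_{n-i}$: the constraints \eqref{eq: marg.equalINV1/2} hold on the range. These are $\lfloor n/2\rfloor$ independent linear constraints (pairing $i$ with $n-i$), so the rank is at most $n+1-\lfloor n/2\rfloor=\lceil n/2\rceil+1$ when $n$ is odd and $n+1-(n/2)=n/2+1$ when $n$ is even; in both cases this equals $\lfloor n/2\rfloor+1$ (for $n$ odd, $\lceil n/2\rceil=\lfloor n/2\rfloor+1$... one must be careful here — in fact the number of independent constraints $\nu_i=\nu_{n-i}$ is $\lceil n/2\rceil$, giving upper bound $n+1-\lceil n/2\rceil=\lfloor n/2\rfloor+1$, consistent). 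For the matching lower bound, symmetrize the explicit formal solution from the proof of Theorem~\ref{theorem: MatrixRankp12}: there we used only partitions $\{S,S^c\}$ and $[n]$, whose integer-partition images are the two-part partitions $(k,n-k)$ and the one-part partition $(n)$, a total of $\lfloor n/2\rfloor+1$ partitions, and that construction already shows every $\nu$ satisfying $\nu_i=\nu_{n-i}$ lies in the range. Hence the rank is exactly $\lfloor n/2\rfloor+1$.

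The main obstacle I anticipate is purely bookkeeping: translating the marginal constraint \eqref{eq: marg.equal} and the triangularization computation from the $\sigma$-indexed setting into the $\pi$-indexed setting with the correct $\binom{n}{k}$ and $a_\pi$ normalization factors, and in part (ii) being careful about parity (whether the middle coordinate $\nu_{n/2}$ for even $n$ contributes a constraint — it does not, which is why the count is $\lfloor n/2\rfloor$ rather than $\lceil n/2\rceil$). Once the reduced matrix entries are written down correctly, both rank computations are direct consequences of the non-invariant arguments already in the paper via the averaging identity \eqref{eq: InvariantImage}.
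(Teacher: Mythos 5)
Your proposal is correct in substance, and much of it coincides with the paper's own argument: the translation of the marginal constraints \eqref{eq: marg.equal} into the coordinates $(\nu_0,\dots,\nu_n)$ (the binomial bookkeeping that produces exactly \eqref{eq: marg.equalINVp}), and, for (ii), the observation that $\nu(\rho)=\nu(-\rho)$ becomes $\nu_i=\nu_{n-i}$ together with the explicit two-block formal solution from Theorem~\ref{theorem: MatrixRankp12}. Where you genuinely diverge is the rank lower bound in (i): you redo a symmetrized version of the Step~1 triangularization, keeping only the $n$ integer partitions with at most one non-singleton block and pushing the change of basis $B$ through to the reduced coordinates. This does work (the reduced $B$-transformed block has $(s,t)$ entry proportional to $\bigl(p(1-p)^s+(-p)^s(1-p)\bigr)\binom{n-s}{t-s}$, which is upper triangular with nonzero diagonal once the row $s=1$ is deleted), but it is more machinery than needed: the paper obtains both inclusions at once from the averaging identity \eqref{eq: InvariantImage} combined with the already-established range characterizations of Theorems~\ref{theorem: MatrixRankp12} and~\ref{theorem: MatrixRank}. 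Indeed, any invariant $\nu$ satisfying \eqref{eq: marg.equalINVp} satisfies \eqref{eq: marg.equal}, hence lies in the range of $A_{n,p}$, hence by \eqref{eq: InvariantImage} in the range of $A^{\rm{Inv}}_{n,p}$; conversely anything in the range satisfies \eqref{eq: marg.equal} and so \eqref{eq: marg.equalINVp}. The rank is then simply the dimension of the identified subspace, with no new matrix computation. You cite \eqref{eq: InvariantImage} only as a confirmation; promoting it to the main tool collapses your part (i) to a short consequence of Theorem~\ref{theorem: MatrixRank}, which is what buys the paper its brevity, while your route has the (minor) virtue of being self-contained at the level of the reduced matrix. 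One small correction: in your closing remark on parity, for odd $n$ the number of independent constraints $\nu_i=\nu_{n-i}$ is $\lceil n/2\rceil$, not $\lfloor n/2\rfloor$ (your mid-proof self-correction is the right count; the final parenthetical contradicts it), though either way the range has dimension $n+1-\lceil n/2\rceil=\lfloor n/2\rfloor+1$, as required.
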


\begin{proof}
(i). Denoting by $U_n$ the subset of $\mathbb{R}^{n+1}$ satisfying~\eqref{eq: marg.equalINVp}, we claim that
(after identifying $V^{\rm{Inv}}_n$ with $\mathbb{R}^{n+1}$)
\begin{equation}\label{eq: Range}
U_n= A^{\rm{Inv}}_{n,p}(Q^{\rm{Inv}}_n).
\end{equation}
Since $U_n$ is clearly an $n$-dimensional subspace of $\mathbb{R}^{n+1}$, 
the proof of (i) will then be done. 
To see this, first take $\nu \in U_n$ and let $\nu^{\rm{Inv}}$ be the corresponding element in
$V^{\rm{Inv}}_n$. We first need to show that \eqref{eq: marg.equal} is satisfied for $\nu^{\rm{Inv}}$. 
Fixing any $i \in [n]$, we have
$$
p\sum_{\rho:\rho(i)=0} \nu^{\rm{Inv}}_{\rho}=p\sum_{k=0}^{n-1} \binom{n-1}{k}\frac{\nu_k}{\binom{n}{k}}=
p\sum_{k=0}^{n-1} \frac{n-k}{n} \nu_k
$$
and
\begin{align*}
&(1-p)\sum_{\rho:\rho(i)=1} \nu^{\rm{Inv}}_{\rho} =
(1-p)\sum_{k=0}^{n-2} \binom{n-1}{k}\frac{\nu_{k+1}}{\binom{n}{k+1}}+(1-p)\nu_n
\\&\qquad =
(1-p)\sum_{k=0}^{n-2} \frac{k+1}{n} \nu_{k+1}+(1-p)\nu_n.
\end{align*}
Hence, since $\nu \in U_n$, \eqref{eq: marg.equal} holds. In view of~\eqref{eq: InvariantImage}, 
this shows $\subseteq$ in~\eqref{eq: Range} holds. 

Now fix $ \nu^{\rm{Inv}}\in A^{\rm{Inv}}_{n,p}(Q^{\rm{Inv}}_n)$. Clearly 
$ \nu^{\rm{Inv}}\in V^{\rm{Inv}}_n$ and by 
Theorem~\ref{theorem: MatrixRank}, \eqref{eq: marg.equal} holds. The above computation shows
that the corresponding $\nu \in \mathbb{R}^{n+1}$ 
satisfies~\eqref{eq: marg.equalINVp} and hence is in $U_n$.
This shows that $\supseteq$ in~\eqref{eq: Range} holds as well.

(ii). Denoting now by $U_n$ the subset of $\mathbb{R}^{n+1}$ 
satisfying~\eqref{eq: marg.equalINV1/2}, we claim that
\begin{equation}\label{eq: Range1/2}
U_n= A^{\rm{Inv}}_{n,\frac{1}{2}}(Q^{\rm{Inv}}_n).
\end{equation}
Since $U_n$ is clearly an $(\lfloor n/2 \rfloor +1)$-dimensional subspace of 
$\mathbb{R}^{n+1}$, the proof 
of (ii) will then be done. However, in view of~\eqref{eq: marg.equal1/2} in 
Theorem~\ref{theorem: MatrixRank} and~\eqref{eq: InvariantImage}, this is immediate.
\end{proof}

\section{Limiting solutions as $p$ approaches $0$} \label{s:p0}

In this section, we provide a proof of Theorem~\ref{theorem: solution sim lemma}.

\begin{proof} [Proof of Theorem~\ref{theorem: solution sim lemma}]
We will show that given the assumptions of the lemma, for \( p>0\) sufficiently small there is a color representation \( (q_\sigma) = (q_\sigma(p)) \) of \( X_p \sim \nu_p \) which is such that \( q_\sigma = 0 \) for all \(  \sigma \in \mathcal{B}_n\) with more than one non-singleton partition element. 
To this end, fix \( p \in (0,1/2) \). We now refer to the proof of Theorem~\ref{theorem: MatrixRank}.
By Step 1 in that proof we have that a color representation \( (q_\sigma(p)) \) with the desired properties exists  if and only if the (unique) solution \( (q_{\sigma^S}(p))_{|S| \not = 1} \) to
\begin{equation}\label{eq: linear equation sysem goal}
\nu_p(1^S) = \sum_{T \subseteq [n] \colon |T|\not = 1}  p^{\| (\sigma^T)_S \|} q_{\sigma^T}(p), \qquad S \subseteq [n] \colon |S| \not = 1
\end{equation}
is non-negative.
As in the proof of  Theorem~\ref{theorem: MatrixRank}, let \( A'' \) be the \( 2^n \times  (2^n-n)\) matrix corresponding to~\eqref{eq: linear equation sysem goal} and define \( B = ( B(S,S'))_{S,S' \subseteq [n]} \) by
\[
B(S,S')  \coloneqq (-p)^{|S|-|S'|} I(S' \subseteq S) .
\]
In the proof of Step 1 of Theorem~\ref{theorem: MatrixRank}, we saw that for \( S,T \subseteq [n] \) with \( |T| \not = 1 \),
\begin{align*}
(BA'')(S,T) &= 
 \begin{cases}
p(1-p)^{|S|} + (-p)^{|S|} (1-p) &\text{if } S \subseteq T \cr
 0 &\text{otherwise.}
\end{cases}
\end{align*}
Let \( D = (D(S,S'))_{S,S' \subseteq [n]}\) be the diagonal matrix with 
\[ D(S,S) \coloneqq (p(1-p)^{|S|} + (-p)^{|S|} (1-p))^{-1} I(|S| \not = 1) ,\quad S \subseteq [n].
\]
 Then for \( S,T \subseteq [n] \) with \( |T| \not = 1 \),
\begin{align*}
(DBA'')(S,T) = I(S \subseteq T)  \cdot I(|S| \not = 1).
\end{align*}
Furthermore, one can verify that if we define the matrix \( C = (C(S,S'))_{S,S' \subseteq [n]} \)  by 
\[
C(S,S') \coloneqq 
\begin{cases}
 (-1)^{|S'|-|S|} I(S \subseteq S')  &\text{if } |S| \geq 2 \text{ or } S=S'=\emptyset \cr
  (-1)^{|S'|-|S|} I(S \subseteq S') \cdot(1-|S'|) &\text{if }S' \not = S = \emptyset \cr
  0 &\text{otherwise}
\end{cases}
\]
then (since \( p \not = \{ 0,1/2,1 \} \))
\begin{equation}\label{eq: ABCD eq}
(CDBA'')(S,T) = I(S=T) \cdot I(|S| \not = 1).
\end{equation}
Since, by  Step 1 in the proof of Theorem~\ref{theorem: MatrixRank}, the rank of \(A'' \) is exactly \( 2^n-n \), it follows that if we think of \( \nu_p \) as a column vector, then~\eqref{eq: linear equation sysem goal} is equivalent to
\begin{equation}\label{eq: matrix multiplication thing}
q_{\sigma} (p)=   \begin{cases} e_{S}^t CDB \nu_p &\text{if } \sigma = \sigma^S,\, |S| \not = 1 \cr 0 &\text{otherwise}\end{cases}
\end{equation}
(with \( t \) here meaning transpose and \( e_S \) denoting the vector \( (I(S'=S))_{{S'} \subseteq [n]} \)).
Now note that   \( DB\nu_p(1^\emptyset) =  \nu_p(1^\emptyset) \) and that if \( S \subseteq [n] \) has size \( |S| \geq 2 \), we have that
\[
DB\nu_p(1^S) = e_{S}^T DB \nu_p  = \frac{\sum_{S' \colon S' \subseteq S} (-p)^{|S|-|S'|} \nu_p (1^{S'})}{p(1-p)^{|S|}+(-p)^{|S|}(1-p)}.
\]
Since \( |S|\geq 2 \), the denominator is  \(p (1 + O(p))\), and 
by the left hand side of~\eqref{eq: the main assumption} the numerator is given by \(  \nu_p(1^S) + o(\nu_p(1^S))  \).
It follows that 
\begin{align*}
&DB\nu_p(1^S)  = p^{-I(|S|\geq 2)} (\nu_p(1^S)+o(\nu_p(1^S))) (1 + O(p))
 \\&\qquad = p^{-I(|S|\geq 2)} \nu_p(1^S) 
 +   o(p^{-I(|S|\geq 2)} \nu_p(1^S) )  
\end{align*}
 for any \( S \subseteq [n] \) with \( |S| \not = 1 \). 
If we apply \( C \) to the vector \(  (DB\nu_p(1^S))_{C \subseteq [n]}\), a computation  shows that we get  
\[ e_s^t CDB \nu_p =   p^{-1}  \nu_p(1^S0^{[n] \backslash S})) + o(p^{-1} \nu_p(1^S) )  , \quad S \subseteq [n], \, |S| \geq 2.
\]
 By~\eqref{eq: ABCD eq} and the assumption that \( \nu_p(1^S) \asymp \nu_p(1^S 0^{S^c})\), it follows that \( q_{\sigma^S} \sim p^{-1}  \nu_p(1^S0^{[n] \backslash S})\) for any \( S \subseteq [n] \) with \( |S| \geq 2 \).
Since \( q_{\sigma^\emptyset} = 1 - \sum_{S \subseteq [n] \colon |S| \geq 2} q_{\sigma^S}\), again using the assumptions, this  concludes the proof. 

\end{proof}

\section{Limiting solutions as $p$ approaches $1/2$} \label{s:p12}

Before we proceed to the proof of Theorem~\ref{theorem: lim sol I}, we state and prove a 
few lemmas that will be useful in this proof.

\begin{lemma}\label{lemma: invertible operation}
Let \( f \colon 2^{[n]} \to \mathbb{R} \). Define \( \varphi f \colon 2^{[n]} \to \mathbb{R} \) by
\[
\varphi f(S) \coloneqq \sum_{S' \colon S' \subseteq S} (-2)^{|S'|-|S|} f(S'), \quad S \subseteq [n]
\]
and \( \varphi^{-1} f \colon 2^{[n]} \to \mathbb{R} \)  by
\[
\varphi^{-1} f(S) \coloneqq \sum_{S' \colon S' \subseteq S} 2^{|S'|-|S|} f(S'), \quad S \subseteq [n].
\]
Then
\[
\varphi^{-1}\varphi f(S) = f(S), \quad S \subseteq [n].
\]
\end{lemma}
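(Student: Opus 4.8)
The plan is to prove the identity by direct substitution into the two defining sums and then collapsing the resulting nested double sum with the binomial theorem. First I would expand
\[
\varphi^{-1}\varphi f(S) = \sum_{S'' \colon S'' \subseteq S} 2^{|S''|-|S|} \sum_{S' \colon S' \subseteq S''} (-2)^{|S'|-|S''|} f(S'),
\]
and interchange the order of summation to rewrite this as \( \sum_{S' \colon S' \subseteq S} c(S',S)\, f(S') \), where
\[
c(S',S) \coloneqq \sum_{S'' \colon S' \subseteq S'' \subseteq S} 2^{|S''|-|S|}\,(-2)^{|S'|-|S''|}.
\]

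Next I would simplify the summand by pulling the powers of \( 2 \) together: since \( 2^{|S''|-|S|}(-2)^{|S'|-|S''|} = 2^{|S'|-|S|}(-1)^{|S'|-|S''|} \), one gets \( c(S',S) = 2^{|S'|-|S|} \sum_{S'' \colon S' \subseteq S'' \subseteq S} (-1)^{|S''|-|S'|} \). The sets \( S'' \) with \( S' \subseteq S'' \subseteq S \) are in bijection with the subsets of \( S \backslash S' \), so grouping them by cardinality and writing \( k = |S| - |S'| \) gives \( \sum_{j=0}^{k} \binom{k}{j} (-1)^j = (1-1)^k \). This vanishes unless \( k = 0 \), i.e.\ \( S' = S \), in which case \( c(S,S) = 1 \). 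Hence only the \( S' = S \) term survives, and \( \varphi^{-1}\varphi f(S) = f(S) \).

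This computation is entirely routine and I do not anticipate a genuine obstacle; the only place to be careful is the bookkeeping of exponents when combining \( 2^{|S''|-|S|} \) with \( (-2)^{|S'|-|S''|} \) so that the sign is not lost, and observing that the convention \( 0^0 = 1 \) is exactly what produces the diagonal term. Alternatively, one could phrase the argument as the observation that, after ordering the subsets of \( [n] \) by cardinality, the matrices \( (2^{|S'|-|S|} I(S' \subseteq S)) \) and \( ((-2)^{|S'|-|S|} I(S' \subseteq S)) \) are lower triangular with unit diagonal and are mutually inverse — the product being the identity by the same binomial cancellation — but the direct computation above is the shortest route, and it is the form that will be reused when \( C \), \( D \), \( B \) are composed in the proof of Theorem~\ref{theorem: lim sol I}.
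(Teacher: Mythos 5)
Your proposal is correct and follows essentially the same route as the paper's proof: expand the composition, interchange the order of summation, and observe that the inner alternating sum over sets between \( S' \) and \( S \) is \( (1-1)^{|S\backslash S'|} \), which vanishes unless \( S' = S \). The bookkeeping of the exponents is handled correctly, so there is nothing to add.
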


This lemma is a type of M\"obius inversion formula. For completeness, we present a short proof.

\begin{proof} 
Let \( T \subseteq [n] \). Then we have that
\begin{align*}
&\varphi^{-1} \varphi f(T) =  \sum_{S \colon S \subseteq T}  2^{|S|-|T|}  \varphi f(S)
=
 \sum_{S \colon S \subseteq T}  2^{|S|-|T|}  \sum_{S' \colon S' \subseteq S} (-2)^{|S'|-|S|}f(S')
\\&\qquad =
\sum_{S' \colon S' \subseteq T}  \sum_{S \colon S' \subseteq S \subseteq T}  2^{|S|-|T|}   (-2)^{|S'|-|S|}f(S')
\\&\qquad =
2^{-|T|}\sum_{S' \colon S' \subseteq T} 2^{|S'|} f(S') \sum_{S \colon S' \subseteq S \subseteq T}    (-1)^{|S'|-|S|} 
\\&\qquad =
2^{-|T|}\sum_{S' \colon S' \subseteq T} 2^{|S'|} f(S') \sum_{S'' \colon S'' \subseteq  T\backslash S'}    (-1)^{|S''|} 
\\&\qquad =
2^{-|T|}\sum_{S' \colon S' \subseteq T} 2^{|S'|} f(S') I(S' = T)
= f(T).
\end{align*} 
\end{proof}

\begin{lemma}\label{lemma: full rank matrix}
Define \( A \colon  \mathcal{B}_n \to \mathbb{R}^{2^{[n]}}   \) by
\[
A(S,\sigma) \coloneqq I(\sigma_S \text{ has most one odd sized partition element}).
\]
Then \( A \) has rank \( 2^n - n \).
\end{lemma}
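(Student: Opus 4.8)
The plan is to mimic the structure of the proof of Theorem~\ref{theorem: MatrixRank}, Step~1, but now adapted to the parity-type matrix $A$. First I would observe that the matrix $A$ has $2^n$ rows (indexed by $S\subseteq[n]$) and $|\mathcal B_n|$ columns (indexed by $\sigma\in\mathcal B_n$), so it suffices to exhibit a set of $2^n-n$ columns whose span has dimension $2^n-n$, together with an argument that the rank cannot exceed $2^n-n$. For the lower bound I would restrict attention, exactly as in the earlier proof, to the columns indexed by the partitions $\sigma^T$ for $T\subseteq[n]$ with $|T|\neq 1$ (the partition into singletons together with one possibly-larger block $T$); there are precisely $2^n-n$ such partitions. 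Let $A''$ be the corresponding $2^n\times(2^n-n)$ submatrix, so that $A''(S,\sigma^T)=I\big((\sigma^T)_S \text{ has at most one odd-sized partition element}\big)$. Using the formula $\|(\sigma^T)_S\|=|S\setminus T|+(1\wedge|S\cap T|)$ one checks that the partition $(\sigma^T)_S$ consists of $|S\setminus T|$ singletons together with one block of size $|S\cap T|$ (when $S\cap T\neq\emptyset$); hence it has at most one odd-sized block precisely when $|S\setminus T|\le 1$, i.e.\ when $|S\setminus T|\in\{0,1\}$. So $A''(S,\sigma^T)=I(|S\setminus T|\le 1)$.

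The heart of the argument is then to show this $\{0,1\}$ matrix with entries $I(|S\setminus T|\le 1)$ has rank $2^n-n$. I would do this by left-multiplying by an explicit invertible $2^n\times 2^n$ matrix to triangularize, just as $B$ was used in the earlier proof. The natural candidate is (a variant of) $B(S,S')=(-1)^{|S|-|S'|}I(S'\subseteq S)$, the Möbius matrix of the subset lattice, which is lower-triangular with unit diagonal when rows/columns are sorted by size, hence invertible. Computing $(BA'')(S,T)=\sum_{S'\subseteq S}(-1)^{|S|-|S'|}I(|S'\setminus T|\le 1)$, I expect, after splitting the inner sum over $S'=(S'\cap T)\sqcup(S'\setminus T)$ and summing the $S'\cap T$ part to $I(S\cap T=\emptyset)$-type indicators, to obtain something that vanishes unless $S\subseteq T$ and is a nonzero constant (depending only on $|S|$, and nonzero precisely when $|S|\neq 1$) when $S\subseteq T$ — entirely parallel to the computation $(BA'')(S,T)=p(1-p)^{|S|}+(-p)^{|S|}(1-p)$ in the earlier proof, with the role of $p$ now played effectively by the combinatorics of "$\le 1$". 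Then, ordering rows and columns by set size, the $(2^n-n)\times(2^n-n)$ submatrix obtained by deleting the $n$ rows with $|S|=1$ is upper-triangular with nonzero diagonal, so $BA''$ — and hence $A''$ and hence $A$ — has rank at least $2^n-n$.

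For the matching upper bound I would exhibit $n$ explicit linear relations among the rows of $A$. The natural guess, suggested by the earlier marginal relations, is that for each $i\in[n]$ the row indexed by $\{i\}$ is a linear combination of rows indexed by $\emptyset$ and by sets not equal to $\{i\}$; more cleanly, one shows that applying $\varphi$ (or the Möbius-type operator of Lemma~\ref{lemma: invertible operation}, or the matrix $B$ above) to the rows of $A$ produces a matrix with exactly $n$ zero rows, namely those with $|S|=1$, which gives $n$ independent linear dependencies among the original rows; since $B$ is invertible this shows $\operatorname{rank} A=\operatorname{rank}(BA)\le 2^n-n$. Combined with the lower bound, $\operatorname{rank} A=2^n-n$. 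I expect the main obstacle to be the bookkeeping in the $(BA'')(S,T)$ computation — in particular verifying cleanly that the result is $0$ unless $S\subseteq T$ and that the nonzero value is exactly $0$ iff $|S|=1$; this is the parity analogue of the step in Theorem~\ref{theorem: MatrixRank} where $p\neq 1/2$ was used, and getting the vanishing pattern exactly right is where care is needed.
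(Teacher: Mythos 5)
Your overall strategy (restrict to the columns $\sigma^T$ with $|T|\neq 1$, triangularize by row operations for the lower bound; find $n$ row dependencies for the upper bound) is the same as the paper's, and your upper bound is fine: for every $\sigma\in\mathcal{B}_n$ one has $A(\{i\},\sigma)=A(\emptyset,\sigma)=1$, so the $n$ rows indexed by singletons coincide with the row indexed by $\emptyset$, giving $\operatorname{rank}A\le 2^n-n$. But the lower bound contains a genuine error at its key step: the entry formula $A''(S,\sigma^T)=I(|S\setminus T|\le 1)$ is wrong. The restriction $(\sigma^T)_S$ consists of $|S\setminus T|$ singletons together with the block $S\cap T$, so when $|S\setminus T|=1$ and $|S\cap T|$ is odd (equivalently $|S|$ is even and $\ge 2$) there are \emph{two} odd-sized blocks and the entry is $0$. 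Concretely, for $n=3$, $T=\{1,2\}$, $S=\{1,3\}$, the restriction is $\{\{1\},\{3\}\}$ and $A(S,\sigma^T)=0$, while your formula gives $1$. The correct value is $A(S,\sigma^T)=I(S\subseteq T)+I(|S|\text{ odd},\,|S\setminus T|=1)$, and this parity term is exactly the structure the lemma lives on, so it cannot be dropped.

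This error then undermines the triangularization you ``expect'': even with the corrected entries, left-multiplying by the sign-M\"obius matrix $B(S,S')=(-1)^{|S|-|S'|}I(S'\subseteq S)$ does not localize on $S\subseteq T$ (for instance $\sum_{S'\subseteq S}(-1)^{|S|-|S'|}I(S'\subseteq T)=(-1)^{|S|}I(S\cap T=\emptyset)$), so the analogue of the computation $(BA'')(S,T)=p(1-p)^{|S|}+(-p)^{|S|}(1-p)$ simply does not come out in the claimed form; the cancellation in Theorem~\ref{theorem: MatrixRank} used the specific interplay between $(-p)^{|S|-|S'|}$ and $p^{\|(\sigma^T)_{S'}\|}$, which has no counterpart for $0/1$ indicator entries. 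The paper instead uses a different, targeted row operation tailored to the parity term: for odd $|S|$ it subtracts from row $S$ the sum of the rows indexed by $S\setminus\{i\}$, $i\in S$, which yields
\[
A(S,\sigma^T)-I(|S|\text{ odd})\sum_{i\in S}A(S\setminus\{i\},\sigma^T)=\bigl(1-|S|\,I(|S|\text{ odd})\bigr)\,I(S\subseteq T),
\]
and since $1-|S|\,I(|S|\text{ odd})$ vanishes exactly when $|S|=1$ and $(I(S\subseteq T))_{S,T}$ has full rank, the rows with $|S|\neq 1$ are independent. To repair your proof you would need both the corrected entry formula and a row operation of this kind (or an equivalent one); as written, the plan would not go through.
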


\begin{proof}[Proof of Lemma~\ref{lemma: full rank matrix}]
Recall the definition of \( \sigma^T \) from the proof of Theorem~\ref{theorem: MatrixRank}. One can check that for any \( S \subseteq [n] \),
\[
A(S,\sigma^T) = 
\begin{cases}
1 &\text{if } S \subseteq T \cr
1 &\text{if } |S| \text{ is odd and } |S \backslash T| = 1 \cr
0 &\text{else}.
\end{cases}
\]
This implies in particular that
\[
A(S,\sigma^T) - \sum_{i \in S} A(S\backslash \{ i \}, \sigma^T) \, I(|S| \text{ is odd}) = \bigl(1 - |S| \cdot I(|S| \text{ is odd}) \bigr) \cdot I(S \subseteq T).
\]
Since \( (I(S \subseteq T))_{S,T \subseteq [n]} \) has full rank, it follows that \( A \), when restricted to sets \( S \subseteq [n] \) with \( |S| \not = 1 \), has full rank, i.e.\ rank \(2^n-n \). Since \( A(\{ i \},\sigma^T) = A(\emptyset, \sigma^T) = 1 \) for all \( T \subseteq [n] \) with \( |T| \not = 1 \) and all \( i \in [n] \), \( A \) can have rank at most \( 2^n-n \), hence the desired conclusion follows.
\end{proof}

\begin{lemma} \label{eq: odd rows are zero}
If \( S \subseteq [n] \),  \( |S| \) is odd and   \( \nu \colon \{ 0,1\}^n \to \mathbb{R} \) is \( \{ 0,1\} \)-symmetric, 
\[
\sum_{T \colon T\subseteq S} (-2)^{|T|} \nu(1^{T}) = 0.
\]
\end{lemma}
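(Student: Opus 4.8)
The plan is to expand $\nu(1^T)$ in terms of the ``all ones on a set, all zeros elsewhere'' probabilities and then exploit the $\{0,1\}$-symmetry to pair terms. First I would write, for each $T \subseteq S$,
\[
\nu(1^T) = \sum_{U \colon T \subseteq U \subseteq [n]} \nu(1^U 0^{[n] \backslash U}),
\]
so that
\[
\sum_{T \colon T \subseteq S} (-2)^{|T|} \nu(1^T) = \sum_{U \subseteq [n]} \nu(1^U 0^{[n] \backslash U}) \sum_{T \colon T \subseteq S \cap U} (-2)^{|T|} = \sum_{U \subseteq [n]} \nu(1^U 0^{[n] \backslash U}) (-1)^{|S \cap U|}.
\]
Grouping the sum over $U$ according to the value of $U \cap S$, this becomes $\sum_{V \subseteq S} (-1)^{|V|} \sum_{W \subseteq [n] \backslash S} \nu(1^{V \cup W} 0^{[n] \backslash (V \cup W)})$, i.e.\ $\sum_{V \subseteq S} (-1)^{|V|} \nu(1 \text{ exactly on } V \text{ within } S)$, where the inner quantity is the $\nu$-probability that $\rho|_S$ equals the indicator of $V$ (with no constraint off $S$).

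Next I would use the $\{0,1\}$-symmetry: replacing $\rho$ by $-\rho$ shows that the probability that $\rho|_S = \mathbf{1}_V$ equals the probability that $\rho|_S = \mathbf{1}_{S \backslash V}$. Since $|S|$ is odd, the map $V \mapsto S \backslash V$ is a fixed-point-free involution on the subsets of $S$, and it flips the sign $(-1)^{|V|}$ because $|S \backslash V| = |S| - |V|$ has the opposite parity to $|V|$. Pairing each $V$ with $S \backslash V$ in the sum $\sum_{V \subseteq S} (-1)^{|V|} \nu(\rho|_S = \mathbf{1}_V)$ therefore causes every term to cancel, giving $0$, which is exactly the claimed identity.

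There is no serious obstacle here; the only thing to be careful about is the bookkeeping in the first display, namely that after interchanging the order of summation the inner geometric-type sum $\sum_{T \subseteq S \cap U}(-2)^{|T|}$ equals $(1-2)^{|S \cap U|} = (-1)^{|S \cap U|}$, and then that regrouping over $V = U \cap S$ correctly produces marginal probabilities on $S$. Alternatively — and perhaps more cleanly — one can avoid the intermediate regrouping entirely: starting from $\sum_{U \subseteq [n]} (-1)^{|S \cap U|} \nu(1^U 0^{[n]\backslash U})$, apply the involution $U \mapsto U \triangle S$ (symmetric difference with $S$) on subsets of $[n]$; since $(1^U 0^{[n] \backslash U})$ and $(1^{U \triangle S} 0^{[n] \backslash (U \triangle S)})$ have equal $\nu$-probability by $\{0,1\}$-symmetry applied to the coordinates in $S$ only, while $|S \cap U| + |S \cap (U \triangle S)| = |S|$ is odd so the two signs are opposite, the sum is again visibly zero.
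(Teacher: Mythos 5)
Your main argument is correct, and it is essentially the paper's proof run in the opposite direction: the paper starts from the vanishing character sum $\sum_{\rho}(-1)^{\sum_{i\in S}\rho(i)}\nu(\rho)=0$ (which holds by pairing $\rho$ with $-\rho$, using that $|S|$ is odd) and then uses inclusion--exclusion to rewrite it as $\sum_{T\subseteq S}(-2)^{|T|}\nu(1^T)$, whereas you expand $\nu(1^T)$ into the atoms $\nu(1^U0^{[n]\setminus U})$, arrive at the same signed sum $\sum_U(-1)^{|S\cap U|}\nu(1^U0^{[n]\setminus U})$, and kill it by the same global-flip pairing on the marginal-on-$S$ probabilities. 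The bookkeeping is organized differently (atoms and regrouping versus inclusion--exclusion on the upper-set probabilities), but the two ingredients --- the identity relating the target sum to a signed sum over configurations restricted to $S$, and the cancellation from $\nu(\rho)=\nu(-\rho)$ together with odd $|S|$ --- are identical.

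One caveat: the ``perhaps more cleanly'' alternative at the end is not correct as stated. The hypothesis is only invariance under the \emph{global} flip $\rho\mapsto-\rho$; it does not give $\nu(1^U0^{[n]\setminus U})=\nu(1^{U\triangle S}0^{[n]\setminus(U\triangle S)})$, which would require invariance under flipping the coordinates in $S$ alone. For instance, with $n=2$, $S=\{1\}$ and $\nu$ putting mass $1/2$ on each of $(0,0)$ and $(1,1)$, one has $\nu(1^{\{1\}}0^{\{2\}})=0\neq 1/2=\nu(0^{[2]})$ even though $\nu$ is $\{0,1\}$-symmetric. The pointwise-pairing version can be salvaged by using the involution $U\mapsto[n]\setminus U$ instead: then $\nu(1^U0^{[n]\setminus U})=\nu(1^{[n]\setminus U}0^{U})$ by the global flip, the involution is fixed-point-free, and $|S\cap U|+|S\cap([n]\setminus U)|=|S|$ is odd so the signs are opposite, giving the cancellation. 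Your first, marginal-based pairing avoids this issue entirely because summing over the configuration off $S$ makes the global flip act as the flip on $S$ there.
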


\begin{proof}[Proof of Lemma~\ref{eq: odd rows are zero}]
Fix a set \( S \subseteq [n] \) with \( |S| \) odd.
Since \( |S|\) is odd and \( \nu \) is symmetric, 
\begin{align*}
&0 = \sum_{\rho \in \{ 0 ,1 \}^n}(-1)^{\sum_{i \in S} \rho(i)} \nu(\rho) = \sum_{ T \colon T \subseteq S} (-1)^{|S \backslash T|} \nu(0^T 1^{S \backslash T})
\\&\qquad
= \sum_{T \colon T \subseteq S} (-1)^{|S  | - |T|} \nu(0^T 1^{S \backslash T}).
\end{align*}
Next, by inclusion exclusion, for any set \( T \subseteq S \),  
\[
\nu(0^T 1^{S \backslash T}) = \sum_{T' \colon T' \subseteq T} (-1)^{|T'|} \nu(1^{T' \cup (S \backslash T)}).
\]
Combining the two earlier equations and then changing the order of summation, we obtain
\begin{align*}
0  
&= \sum_{ T \colon T \subseteq S} (-1)^{|S  | - |T|} \sum_{T' \colon T' \subseteq T} (-1)^{|T'|} \nu(1^{T' \cup (S \backslash T)})
\\&=
\sum_{T',T \colon  T' \subseteq T \subseteq S} (-1)^{  |S | - (|T| - |T'|) } \nu(1^{S \backslash ( T \backslash T')})
\\&=
\sum_{ S' \colon S' \subseteq S }  (-1)^{|S'|} \nu(1^{S'}) \cdot 2^{|S'|}
=
\sum_{S' \colon  S' \subseteq S}  (-2)^{|S'|} \nu(1^{S'})
\end{align*}
which is the desired conclusion.
\end{proof}

\begin{lemma} \label{eq: even rows are zero}
Suppose that \( S \subseteq [n] \),  \( |S| \) is even and that \( \nu_p \colon \{ 0,1\}^n \to \mathbb{R} \) is differentiable in \( p \) at \( p =1/2 \). Suppose further that for all \( T \subseteq S  \) and all \( p \in   (1/2,1) \), \( \nu_p \) satisfies
\[
\nu_p(1^S0^{T\backslash S}) = \nu_{1-p}  (1^{T \backslash S} 0^S ).
\]
Then
\[
\sum_{ T \colon T\subseteq S} (-2)^{|T|} \nu_{1/2}'(1^{T}) = 0.
\]
\end{lemma}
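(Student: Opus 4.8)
The plan is to reduce the statement to a single functional equation $h(p) = h(1-p)$ valid near $p = 1/2$, where $h$ is a function whose derivative at $1/2$ is exactly the quantity we want to show vanishes. First I would, exactly as in the opening of the proof of Lemma~\ref{eq: odd rows are zero}, use inclusion--exclusion to rewrite the target sum (without yet invoking any symmetry): for every $p$,
\begin{align*}
\sum_{T \colon T \subseteq S} (-2)^{|T|} \nu_p(1^T)
&= \sum_{\rho \in \{0,1\}^n} (-1)^{\sum_{i \in S}\rho(i)} \nu_p(\rho) \\
&= \sum_{U \colon U \subseteq S} (-1)^{|S|-|U|}\, \nu_p\bigl(0^{U} 1^{S\backslash U}\bigr).
\end{align*}
Since $|S|$ is even we have $(-1)^{|S|-|U|} = (-1)^{|U|}$, so setting $h(p) \coloneqq \sum_{U \subseteq S} (-1)^{|U|} \nu_p(0^U 1^{S\backslash U})$ we obtain $h(p) = \sum_{T\subseteq S}(-2)^{|T|}\nu_p(1^T)$ for every $p$. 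In particular $h$ is differentiable at $p = 1/2$ by hypothesis, with $h'(1/2) = \sum_{T\subseteq S}(-2)^{|T|}\nu'_{1/2}(1^T)$, which is the quantity to be shown to be zero.

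Next I would invoke the symmetry hypothesis. Applied to each pair $U \subseteq S$ it gives $\nu_p(0^U 1^{S\backslash U}) = \nu_{1-p}(0^{S\backslash U} 1^{U})$ for $p \in (1/2,1)$, so
\[
h(p) = \sum_{U \colon U \subseteq S} (-1)^{|U|}\, \nu_{1-p}\bigl(0^{S\backslash U} 1^{U}\bigr).
\]
Re-indexing by $V = S \backslash U$ and again using that $|S|$ is even (so $(-1)^{|U|} = (-1)^{|V|}$) turns the right-hand side into $\sum_{V\subseteq S}(-1)^{|V|}\nu_{1-p}(0^V 1^{S\backslash V}) = h(1-p)$. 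Hence $h(p) = h(1-p)$ for all $p \in (1/2,1)$, and therefore, reading the same identity with $p$ replaced by $1-p$, for all $p \in (0,1)\setminus\{1/2\}$.

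Finally, since $h$ is differentiable at $1/2$ and satisfies $h(1/2 + t) = h(1/2 - t)$ for all sufficiently small $t \neq 0$, comparing the two one-sided difference quotients at $1/2$ yields $h'(1/2) = -h'(1/2)$, hence $h'(1/2) = 0$, which is the claim. The only place requiring care is the inclusion--exclusion bookkeeping and the sign tracking in the first two steps: it is precisely the evenness of $|S|$ that makes the signs line up so that $h(p) = h(1-p)$ rather than $h(p) = -h(1-p)$ (the latter is what the analogous computation produces for odd $|S|$, in which case the \emph{value} and not the derivative of $h$ vanishes at $1/2$, recovering the content of Lemma~\ref{eq: odd rows are zero}). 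Everything else is routine.
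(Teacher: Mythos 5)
Your argument is correct and rests on the same two ingredients as the paper's proof: the inclusion--exclusion identity established in the proof of Lemma~\ref{eq: odd rows are zero} (valid for any $\nu$, not just symmetric ones) and the $0$--$1$ swap symmetry relating $\nu_p$ to $\nu_{1-p}$, with the evenness of $|S|$ making the signs align. The only difference is packaging: you sum first and then differentiate the single function $h$, which is even about $p=1/2$, whereas the paper differentiates each marginal via a symmetric difference quotient, obtaining $\nu'_{1/2}(0^T 1^{S\setminus T}) = -\nu'_{1/2}(1^T 0^{S\setminus T})$, and then cancels terms in the pairs $T \leftrightarrow S\setminus T$ --- the same mechanism applied in a different order.
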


\begin{proof}[Proof of Lemma~\ref{eq: even rows are zero}]
Fix a set  \( S \subseteq [n] \) with \( |S| \) even.
Note  that, using  the assumption on   \( (\nu_p) \),  for any \( T \subseteq S \), we have that
\begin{equation}\label{eq: derivative equality}
\begin{split}	
&\nu_{1/2}'(0^T 1^{S \backslash T}) = \lim_{p \to 1/2} \frac{\nu_p(0^T 1^{S \backslash T})-\nu_{1-p}(0^T 1^{S \backslash T})}{p - (1-p)}
\\&\qquad  = \lim_{p \to 1/2} \frac{\nu_{1-p}(1^T 0^{S \backslash T})-\nu_{p}(1^T 0^{S \backslash T})}{p - (1-p)}
= -\nu_{1/2}'(1^T 0^{S \backslash T}).
\end{split}
\end{equation}
Next, by the proof of the Lemma~\ref{eq: odd rows are zero}, we have that
\[
2\sum_{T \colon T \subseteq S}  (-2)^{|T|} \nu'_{1/2}(1^{T})
=
 2\sum_{T \colon  T \subseteq S} (-1)^{|S  | - |T|} \nu'_{1/2}(0^T 1^{S \backslash T}).
\]
By~\eqref{eq: derivative equality}, this equals
\begin{align*}
& \sum_{T \colon  T \subseteq S} (-1)^{|S  | - |T|} \Bigl[ \nu'_{1/2}(0^T 1^{S \backslash T}) - \nu'_{1/2}( 0^{S \backslash T} 1^T ) \Bigr]
\\&\qquad  =
  \sum_{T \colon  T \subseteq S} \nu'_{1/2}(0^T 1^{S \backslash T})  
 \Bigl[ (-1)^{|S  | - |T|} - (-1)^{|S  | - |S \backslash T|}  \Bigr].
\end{align*}
Since \( |S| \) is even,   \( |T| \) and \( |S\backslash T| \) have the same parity, and hence the desired conclusion follows.

\end{proof}

We now proceed to the proof of Theorem~\ref{theorem: lim sol I}.
\begin{proof}[Proof of Theorem~\ref{theorem: lim sol I}]
Assume that \( (q_\sigma^{(p)})_{\sigma \in \mathcal{B}_n} \) is such that
\begin{equation}\label{eq: our system and q}
\sum_{\sigma  \in \mathcal{B}_n} p^{\| \sigma_S\| } q_\sigma^{(p)} = \nu_p(1^S), \qquad S \subseteq [n]
\end{equation}
holds. Note that for \( p \) close to \( 1/2 \), we have that
\[
p^{\| \sigma_S\| } = 2^{-\| \sigma_S \|} + \| \sigma_S \| 2^{-\| \sigma_S \|+1} (p-1/2) + o(p-1/2).
\]
Further, as \( \nu_p \) is differentiable in \( p \) at \( 1/2 \), we have that
\[
\nu_p(1^{ S }) = \nu_{1/2}(1^{S })  + \nu'_{1/2}(1^{S })  (p-1/2) + o(p-1/2).
\]
Using these expansions, we will now apply \( \varphi \), as defined in Lemma~\ref{lemma: invertible operation}, to both sides of~\eqref{eq: our system and q}. To this end, we first introduce the following notation.
Given \( \sigma \in \mathcal{B}_n\) and \( S \subseteq [n]\),  write \( \sigma_S = \{ T_1, T_2, \ldots, T_m \} \), where \( m = \| \sigma \| \), to denote that the partition elements of \( \sigma \) when restricted to \( S \) are given by   \( T_1, T_2, \ldots, T_m \subseteq S \).  Using this notation, for any fixed  set \( S \subseteq [n] \) and \( \sigma \in \mathcal{B}_n \), we have that
\begin{align*}
&\sum_{S' \colon S' \subseteq S} (-2^{-1})^{|S| - |S'|} \cdot 2^{-\| \sigma_{S'}\| } 
 = \sum_{S_1, \ldots, S_m  \colon \atop  \forall i \in [m] \colon S_i \subseteq T_i } \prod_{i=1}^m (-2^{-1})^{|T_i| - |S_i|} 2^{-I(S_i \not = \emptyset)}
\\&\qquad  = \prod_{i=1}^m \sum_{S_i \colon S_i \subseteq T_i}  (-2^{-1})^{|T_i| - |S_i|} \cdot  2^{-I(S_i \not = \emptyset)}
 = \prod_{i=1}^m (1 + (-1)^{|T_i|} ) \cdot (2^{-1})^{|T_i|+1}
 \\&\qquad = 2^{-|S|} I(\sigma_S \text{ has only even sized partition elements}).
\end{align*}
Similarly, we have that
\begin{align*}
&\sum_{S' \colon S' \subseteq S} (-2^{-1})^{|S| - |S'|} \cdot   \| \sigma_{S'} \| \, 2^{-\| \sigma_{S'} \|+1 }  
\\&\qquad =
2  \cdot (-2)^{-|S|} \sum_{S' \colon S' \subseteq S} (-2)^{  |S'|} \cdot   \| \sigma_{S'} \|  \, 2^{-\| \sigma_{S'} \| }  
\\&\qquad =
2 \cdot (-2)^{-|S|} \sum_{I \colon I \subseteq [m]} |I|\,  2^{-|I|} \prod_{i \in I} \sum_{S_i \colon S_i \subseteq T_i,\atop S_i \not = \emptyset} (-2)^{|S_i|}
\\&\qquad =
2 \cdot (-2)^{-|S|} \sum_{I \colon I \subseteq [m]} |I| \, 2^{-|I|} \prod_{i \in I} ((1+(-2))^{|T_i|} - 1)
\\&\qquad =
2 \cdot (-2)^{-|S|} \sum_{I \colon I \subseteq [m]} |I| \, 2^{-|I|} \prod_{i \in I} \bigl( I(|T_i| \text{ is odd}) \cdot (-2) \bigr)
\\&\qquad =
2 \cdot (-2)^{-|S|} \sum_{I \colon I \subseteq [m]} |I| \, (-1)^{|I|}   I(|T_i| \text{ is odd} \text{ for all } i \in I)   
\\&\qquad =
2 \cdot (-2)^{-|S|} \cdot I(\sigma_S \text{ has exactly one odd sized partition element}) \cdot 1\cdot  (-1)^1
\\&\qquad =
2^{-|S|+1} \cdot I(\sigma_S \text{ has exactly one odd sized partition element}).
\end{align*}
Noting that  \( \varphi \), as defined in Lemma~\ref{lemma: invertible operation}, is linear, applying  it to~\eqref{eq: our system and q} and using the above derivations, we hence obtain
\begin{equation}
\begin{split}	
&\sum_{\sigma \in \mathcal{B}_n}
\Bigl(
2^{-|S|} I(\sigma_S \text{ has only even sized partition elements})
\\&\qquad\qquad + 2^{-|S|+1} \cdot I(\sigma_S \text{ has exactly one odd sized partition element}) (p-1/2)
\\&\qquad\qquad + o(p-1/2)
\Bigr) q_\sigma^{(p)}
\\&\qquad =
\sum_{S' \colon S' \subseteq S} (-2^{-1})^{|S| - |S'|} \Bigl(\nu_{1/2}(1^{S' })  + \nu'_{1/2}(1^{S' })  (p-1/2) + o(p-1/2)\Bigr).
\end{split}\label{eq: before limit}
\end{equation}
Using Lemmas~\ref{eq: odd rows are zero}~and~\ref{eq: even rows are zero}, it follows that this is equivalent to that
\begin{align*}
&\sum_{\sigma  \in \mathcal{B}_n}
\Bigl(
  I(\sigma_S \text{ has only even sized partition elements}) 
 + o(p-1/2)
\Bigr) q_\sigma^{(p)}
\\&\qquad =
  \sum_{S' \colon S' \subseteq S} (-2)^{|S'|} \nu_{1/2}(1^{S' }) + o(p-1/2)), \qquad  \text{if $|S|$ is even}
\end{align*}
and
\begin{align*}
&\sum_{\sigma  \in \mathcal{B}_n}
\Bigl(
 I(\sigma_S \text{ has exactly one odd sized partition element})  
 + o(1) 
\Bigr) q_\sigma^{(p)}
\\&\qquad =
 \sum_{S' \colon S' \subseteq S} (-2)^{ |S'|-1} \nu'_{1/2}(1^{S'})+  o(1), \qquad   \text{if $|S|$ is odd.}
\end{align*}
Now let \( (q_\sigma)_{\sigma \in \mathcal{B}_n} \) be any subsequential limit, as \( p \to 1/2 \), of formal solutions \( (q_\sigma^{(p)})_{\sigma \in \mathcal{B}_n}\) to~\eqref{eq: our system and q}. Then, combining the previous two equations and letting \( p \to 1/2 \), we obtain
\begin{align*}
&\sum_{ \sigma \in \mathcal{B}_n}
  I(\sigma_S \text{ has at most one odd sized partition element})  
\, q_\sigma
\\&\qquad =
\begin{cases}
  \sum_{S' \colon S' \subseteq S} (-2)^{|S'|} \nu_{1/2}(1^{S'})  &\text{ if $|S|$ is even} \cr
 \sum_{S' \colon S' \subseteq S} (-2)^{ |S'|-1} \nu'_{1/2}(1^{S'})    &\text{ if $|S|$ is odd.}
  \end{cases}
\end{align*}
By applying  \( \varphi^{-1} \) as defined in Lemma~\ref{lemma: invertible operation}, we obtain~\eqref{eq: limiting system}.
For the other direction, note that by Lemma~\ref{lemma: full rank matrix}, the   matrix  corresponding to the left hand side in the previous equation  has rank \( 2^n-n\). 
By Theorem~\ref{eq: marg.equal}, this is also the rank of \( A_{n,p} \) when \( p \not \in \{ 0,1/2,1 \} \), and hence of the equivalent matrix given by the left hand side of~\eqref{eq: before limit}. By a standard argument,  it follows that~\eqref{eq: limiting system} exactly describes the limiting solutions. 
This concludes the proof.
\end{proof}
 
We now provide the proof of Corollary~\ref{corollary: Ising}.

\begin{proof}[Proof of Corollary~\ref{corollary: Ising}]
We first need to place ourselves into the context of Theorem~\ref{theorem: lim sol I} which
we do as follows. With $J$ fixed, define a function $h$ from $(0,1)$ to $\mathbb{R}$
where $h(p)$ is such that the one-dimensional marginal of $\nu_{J,h(p)}$ is $p$.
It is easy to see that $h(1/2)=0$ and that $h$ is symmetric about $1/2$.
It also follows from  well known inequalities that $h$ is increasing, bijective and differentiable.
We now let $\nu_p \coloneqq \nu_{J,h(p)}$. Understanding what happens as $h\to 0$ is the same
as understanding what happens for $\nu_p$ as $p\to 1/2$. We need to look at the solutions
of~\eqref{eq: limiting system}. Only symmetric solutions can arise and we then, for a random
partition, let, for $i=1,2,3$, $q_i$ be the probability that there are $i$ partition elements.
$q_1$ and $q_3$ each correspond to one configuration while $q_2$ corresponds to three.
In~\eqref{eq: limiting system}, by symmetry, there are just four equations corresponding to $S$ having
sizes zero, one, two and three. $S$ having size zero and one both yield the equation
$$
q_1+q_2+q_3=1.
$$
The interesting equations are for $|S|$ being two and three. It is easy to check that the $|S|=2$ equation
yields
$$
\frac{q_1}{2}+\frac{q_2}{3}+\frac{q_3}{4}=\frac{e^{3J}+e^{-J}}{2e^{3J}+6e^{-J}}.
$$
For the $|S|=3$ equation, we first need the right hand side. By the chain rule,
this equals the derivative of the probability of having all 1's with respect to $h$ at $h=0$
times $h'(p)$ at $p=1/2$. For the latter, using the inverse instead, 
it is straightforward to compute $p'(h)$ at $h=0$ to be
$\frac{3e^{3J}+e^{-J}}{2e^{3J}+6e^{-J}}$, and hence 
$h'(1/2)=\frac{2e^{3J}+6e^{-J}}{3e^{3J}+e^{-J}}$. 
For the derivative of the probability of having all 1's with respect to $h$ for $h=0$,
a computation yields this to be $\frac{3e^{3J}}{2e^{3J}+6e^{-J}}$ and hence the right hand side
is $\frac{3e^{3J}}{3e^{3J}+e^{-J}}$. This easily yields the final equation to be
$$
q_1+q_2+\frac{3q_3}{4}=\frac{3e^{3J}}{3e^{3J}+e^{-J}}.
$$
One checks that the $3\times 3$ system has a unique solution and hence
Theorem~\ref{theorem: lim sol I} implies that 
$\lim_{h\to 0}q^{J,h}$ exists. One can also check that this unique solution is strictly positive
implying that for fixed $J$ and small $h$, $\nu_{J,h}$ is a color process.
One finds $q_2$ to be $\frac{12(e^{4J}-1)}{(3+e^{4J})(1+3e^{4J})}$ 
while one easily checks that $q_2^{\text{RCM}}=\frac{6e^{-2J}(e^{2J}-1)}{3+e^{4J}}$.
Since one can check that for all $J>0$, 
$\frac{12(e^{4J}-1)}{(3+e^{4J})(1+3e^{4J})} < \frac{6e^{-2J}(e^{2J}-1)}{3+e^{4J}}$,
we obtain the claim.
\end{proof}

\section*{Conflict of interest}

The authors declare that they have no conflict of interest.



\begin{thebibliography}{}

\bibitem{BMMU}
Bj\"ornberg, J.E., Mailler, C., M\"orters, P.,  Ueltschi, D.:
Characterising random partitions by random colouring. Electronic communications in probability, Volume 25, paper no. 4 (2020).

\bibitem{palo}
Forsstr\"om, M. P.:
Divide and color representations for Ising models with an external field. Preprint. 

\bibitem{PS.threshold}
Forsstr\"om, M. P., Steif, J. E.:
Divide and color representations for threshold Gaussian and stable vectors. Preprint. 
  
  \bibitem{st2017}
  Steif, J. E., Tykesson, J.:
  Generalized divide and color models.
ALEA. Latin American Journal of Probability and Mathematical Statistics,
{16}, pp. 899-955 (2019).
\end{thebibliography}
\end{document}